\documentclass[11pt,leqno]{amsart}
\usepackage{amsmath,amssymb,amsthm}
\usepackage{hyperref}

\usepackage{tikz-cd}

\usepackage{hyperref}

\usepackage{bbm}

\newcommand{\lspan}{\operatorname{span}}
\newcommand{\clspan}{\overline{\operatorname{span}}}

\renewcommand{\geq}{\geqslant}
\renewcommand{\leq}{\leqslant}

\newcommand{\norm}[1]{\left\Vert#1\right\Vert}

\newcommand{\Lip}{{\mathrm{Lip}}_0}

\newcommand{\dens}{\operatorname{dens}}

\newcommand{\spann}{\operatorname{span}}

\newtheorem{theorem}{Theorem}[section]

\newtheorem{proposition}[theorem]{Proposition}
\newtheorem{corollary}[theorem]{Corollary}
\theoremstyle{definition}
\newtheorem{definition}[theorem]{Definition}

\theoremstyle{remark}
\newtheorem{remark}[theorem]{Remark}

\numberwithin{equation}{section}

\def\fnote#1{\footnote}

\def\ignora#1{}
\def\n3#1{\left\vert  \! \left\vert \! \left\vert \, #1 \, \right\vert \!
  \right\vert \! \right\vert }

\newcommand{\iten}{\ensuremath{\widehat{\otimes}_\varepsilon}}
\newcommand{\pten}{\ensuremath{\widehat{\otimes}_\pi}}

\renewcommand{\leq}{\le}

\usepackage{accents}
\usepackage[most]{tcolorbox}
\usepackage{comment}

\let\emptyset\varnothing

\newcommand{\ra}{\longrightarrow}

\newcommand{\eps}{\varepsilon}

\begin{document}

\author{ Abraham Rueda Zoca }\address{Universidad de Granada, Facultad de Ciencias. Departamento de An\'{a}lisis Matem\'{a}tico, 18071-Granada
(Spain)} \email{ abrahamrueda@ugr.es}
\urladdr{\url{https://arzenglish.wordpress.com}}

\subjclass[2020]{46B04, 46B20, 46B28, 46M07}

\keywords{Transfinite octahedrality; Spaces of operators; Ultrapowers; $C(K)$ spaces}

\title{Transfinite octahedrality in spaces of operators}

\begin{abstract}
We study necessary and sufficient conditions on a Banach space $X$ such that $L(Y,X)$ is transfinite (rigid) octahedral for any Banach space $Y$. We present several examples of such $X$. As application of our results, we find the first example in the literature of Banach space which is rigid octahedral but fails any transfinite version of octahedrality. We also improve some previous results about octahedrality in ultrapower spaces.
\end{abstract}

\maketitle

\section{Introduction}

One of the most powerful tools in Hilbert spaces theory is the natural notion of orthogonality of vectors, which permits to obtain a number of results which are exclusive of those Banach spaces with a scalar product (e.g. orthogonal projections, complementation of closed subspaces, self duality etc.). Because of this reason, several notions of orthogonality of vectors have appeared in the literature of Banach spaces like James orthogonality \cite{jam} or geometric notions which are inspired by the natural orthogonality induced by the usual basis in spaces $\ell_p$ and $c_0$ (see \cite{acllrz23,amcrz23,brss26,cll23,gk89}). In this note we will focus on the particular case of the $\ell_1$ orthogonality, which drives us to the notions of octahedrality.

Given a Banach space $X$ and an infinite cardinal $\kappa$, we say that $X$ is \textit{$\kappa$-octahedral} if for any $\kappa'<\kappa$, any collection $\{x_\alpha: \alpha<\kappa'\}\subseteq S_X$ and any $\varepsilon>0$ there exists $x\in S_X$ such that
$$\Vert x_\alpha+x\Vert\geq 2-\varepsilon\ \mbox{ holds for all }\alpha<\kappa'.$$
If $\varepsilon=0$ can be raiched we say that $X$ is \textit{$\kappa$-rigid octahedral}.

If $\kappa=\omega_0$, we will simply say that $X$ is (rigid) octahedral. The notion of octahedral norm goes back to the unpublished preprint \cite{godmau87}, where it was proved that a separable Banach space $X$ contains an isomorphic copy of $\ell_1$ if, and only if, there exists an equivalent octahedral renorming on $X$. Later, the result was proved in \cite{god89} without the separability assumption. Very recently, the publication \cite{blr14}, where the connection between octahedral norms and the big slice phenomena is first proved, resulted in a boost in the study of octahedrality and it is recently a hot area in the study of Banach spaces (as a sample of this c.f. e.g. \cite{abra15,hlp,lalo21,laru20,pr18}). For uncountable cardinals, the notions of $\kappa$ (rigid) octahedrality goes back to the paper \cite{cll23}, where these properties arose in connection with ball covering properties. Thanks to the results of \cite{amcrz23,cll23}, it was proved that a Banach space $X$ contains an isomorphic copy of $\ell_1(\kappa)$ if and only if there exists an equivalent renorming on $X$ to be $\kappa$ rigid octahedral. More results about transfinite (rigid) octahedrality appeared in \cite{acllrz23,brss26,cll22}. It is known that all the notions of octahedrality are different eachother. To be more precise, it is known that given an infinite cardinal $\kappa$, then $\kappa$-octahedrality does not imply $\kappa$-rigid octahedrality, whereas $\kappa$-rigid octahedrality does not imply $\kappa^+$ octahedrality (and examples can be found in \cite{cll23}) with a single exception: it is not known, to the best of the authors' knowledge, any example of $\omega_0$-rigid octahedral space which is not $\omega_1$-octahedral. We provide such a counterexample in Remark~\ref{remark:rigidohnosepara}.

In this paper we will deepen in the study of transfinite (rigid) octahedrality in spaces of operators. Apart from being a natural question, as spaces of operators are classical Banach spaces, the study of octahedrality in spaces of operators have produced applications to the general theory of octahedrality in Banach spaces. To mention two of them, in \cite[Theorem 3.2]{lrz21} it was shown that the space of compact operators $X:=K(\ell_2(2^{\dens(L_1([0,1])^{**})}),L_1([0,1]))$ produces an example of octahedral space for which there exists no element $x^{**}\in X^{**}\setminus\{0\}$ such that
$$\Vert x+x^{**}\Vert=\Vert x\Vert+\Vert x^{**}\Vert\ \forall x\in X,$$
answering an environment question from \cite{gk89}. Another application was obtained in \cite[Theorem 8.2]{amcrz2022}, where the fact that $K(\ell_2,X)$ is octahedral whenever $X$ is octahedral was employed to prove that, given any separable octahedral space $X$, it follows that the (non-empty) set
$$\{x^{**}\in X^{**}: \Vert x+x^{**}\Vert=\Vert x\Vert+\Vert x^{**}\Vert \ \forall x\in X\},$$
which is not a vector space, always contains a subspace of $X^{**}$ which is isometrically isomorphic to $\ell_2$. 

The literature on octahedrality in spaces of operators is vast \cite{blrope,hlp2,llr,llr2,meru25,rue23}. It is known that given two Banach spaces $X$ and $Y$, if $X^*$ and $Y$ are octahedral then so is any subspace of $L(X,Y)$ containing the finite rank operators \cite[Theorem 3.5]{blrope}, but the result is not true if one assumes octahedrality in either $X^*$ or $Y$ \cite[Theorem 3.8]{llr2}. Observe that the same conclussion is obtained for $<\kappa$ octahedrality for any uncountable cardinal $\kappa$ in \cite[Proposition 5.4 and Remark 5.2]{cll22}. Moreover, those Banach spaces $Y$ such that $L(X,Y)$ is octahedral for every $X$ are characterised in \cite{rue23} in terms of a condition involving the finite representability of every Banach space in $Y$ (see \cite[Theorem 3.7]{rue23}). Indeed, it was proved in \cite[Theorem 3.7]{rue23} that, given a Banach space $X$, the following conditions are equivalent:
\begin{enumerate}
    \item For every Banach space $Y$, every subspace of $L(Y,X)$ containing the finite rank operators is octahedral,
    \item given any finite set of vectors $\{x_1,\ldots, x_n\}\subseteq S_X$, any $k\in\mathbb N$ and any $\varepsilon>0$ there exists $T:\ell_\infty^k\longrightarrow X$ satisfying that
$$\Vert x_i+T(v)\Vert\geq (1-\varepsilon)(1+\Vert v\Vert)$$
holds for every $v\in \ell_\infty^k$.
\end{enumerate}  
The above condition was defined in \cite{rue23} as ``$X$ is universally octahedral''. After this, a strenghtening of universal octahedrality is defined in \cite[Definition 4.1]{rue23} replacing $\ell_\infty^n$ with $c_0$ (and called ``$c_0$-octahedrality''). Even though $c_0$-octahedrality is strictly stronger than universal octahedrality \cite[Example 3.9]{rue23}, many examples of $c_0$-octahedral spaces were obtained such as $\Lip(M)$ or $L_1$-preduals whose norm is octahedral. 

In this paper we will perform a similar strategy for the study of $\kappa$-(rigid) octahedrality in spaces of operators. Indeed, in Section~\ref{sect:necesufi} we will obtain a necessary condition for a (subspace of a) space of operators $L(Y,X)$ to be $\kappa$-(rigid) octahedral under assumptions of uniform convexity or rotundity in Theorem~\ref{theo:condinece}. A sufficient condition is established in Theorem~\ref{theo:condisufi} which, after a brief discussion, will motivate the definition of $(\ell_\infty(\alpha),\beta)$-(rigid) octahedrality. We will prove in Theorem~\ref{theo:linftykappaoh} that if $X$ is an $(\ell_\infty(\alpha),\beta)$-(rigid) octahedral space then $L(Y,X)$ is $\kappa$ octahedral for any cardinal $\kappa$ smaller than $\min\{\alpha,\beta\}$ and for every Banach space $Y$ (when $\alpha$ is uncountable). This motivates the study of $(\ell_\infty(\alpha),\beta)$-(rigid) octahedrality in $C(K)$ spaces (Section~\ref{sect:C(K)}), in ultrapower spaces (Section~\ref{section:ultrapowers}) and in spaces of universal disposition (Section~\ref{section:espadispouni}). To point out some examples in the above mentioned sections, we prove in Theorem~\ref{theo:ultralinfi} that if a Banach space $X$ is $\ell_\infty(\omega_0,\beta)$-octahedral then $X_\mathcal U$ is $\ell_\infty(\omega_1,\beta)$-rigid octahedral for every countably incomplete ultrafilter $\mathcal U$ over any infinite set $I$. We also prove that if $\alpha$ is any infinite cardinal and $X$ is $\ell_\infty(\omega_0,\beta)$-octahedral then there exists a countably incomplete ultrafilter $\mathcal U$ over an infinite set $I$ such that $X_\mathcal U$ is $\ell_\infty(\alpha,\beta)$-rigid octahedral. Concerning Section~\ref{section:espadispouni}, we prove that if $\kappa$ is an infinite cardinal and $X$ is a space of (almost) universal disposition for density $\kappa$ then $X$ is $(\ell_\infty(\alpha),\beta)$-rigid octahedral (resp. $(\ell_\infty(\alpha),\beta)$ octahedral) if $\alpha,\beta$ are cardinals such that $\max\{2^{\alpha'},\beta'\}<\kappa$ holds for every $\alpha'<\alpha, \beta'<\beta$. In Section~\ref{sect:applications} we prove, as an application of Proposition~\ref{prop:ejenocardisupe}, that for every infinite cardinal $\kappa$ there exists a space of operators which is $\kappa$-rigid octahedral but it fails to be $\kappa^+$-octahedral (see Remark~\ref{remark:rigidohnosepara}). In particular,  for $\kappa=\omega_0$ we find the first example in the literature of Banach space which is rigid octahedral but fails to be $\omega_1$-octahedral. Finally, in Section~\ref{sect:byprodlargeocta} we apply some constructions behind the proofs of the results in Section~\ref{section:ultrapowers} to prove that, given a octahedral Banach space $X$, the following assertions hold:
\begin{itemize}
    \item $X_\mathcal U$ is $\omega_1$-rigid octahedral for every countably incomplete ultrafilter $\mathcal U$ over any infinite set $I$ (Theorem~\ref{prop:improhardtke}) and;
    \item For any infinite cardinal $\kappa$ there exists a countably incomplete ultrafilter $\mathcal U$ over some infinite set $J$ such that $X_\mathcal U$ is $\kappa$-rigid octahedral (Theorem~\ref{theo:superlargeohultrapower}).
\end{itemize}
The above mentioned results improve some results from \cite{hard18}.

\section{Notation and preliminary results}

Unless we state the contrary we will consider real Banach spaces. Given a Banach space $X$, we denote by $B_X$ and $S_X$ the closed unit ball and the unit sphere respectively. We also denote by $X^*$ the topological dual of $X$. For a given subset $E$ of $X$ we write $\lspan(E)$ (resp. $\clspan(E)$) for the (closed) linear span of $E$ and $\vert E\vert$ to denote the cardinality of the set $E$. Given two Banach spaces $X$ and $Y$, we denote by $L(X,Y)$ the space of all bounded operators from $X$ to $Y$ and $F(X,Y)$ (resp. $K(X,Y)$) the space of all finite rank and bounded (resp. compact) operators. Given an $\eps > 0$ we will say that $T \in L(X,Y)$ is an $\eps$-isometry if for any $x \in X$ we have the following inequalities
$$(1-\eps) \norm{x} \le \norm{T(x)} \le (1+\eps) \norm{x}.$$
Finally, we will say that a Banach space $X$ has density $\kappa$, denoted by $\dens(X)$, if it is the least cardinal that a subset spanning a dense subspace of $X$ can have. With this definition we recover the classical one for infinite dimensional Banach spaces, that is, the least cardinal of a dense subset of $X$, but we have the advantage that for a finite dimensional Banach space $X$ we obtain that $\dens(X) = \dim(X)$. For any other unexplied notation from Banach spaces theory we refer to \cite{fab}.

As we use some set theoretic notions about cardinals along the text, we refer the reader to \cite[Chapters 3 and 5]{jech03} where one can find more than enough information.

\subsection{Ultrapowers}

Given a Banach space $X$ and an infinite set $I$, we denote  $\ell_\infty(I,X):=\{f\colon I\longrightarrow X: \sup_{i\in I}\Vert f(i)\Vert<\infty\}$. Given a free ultrafilter $\mathcal U$ over $I$, consider $N_\mathcal U:=\{f\in \ell_\infty(I,X): \lim_\mathcal U \Vert f(i)\Vert=0\}$. The \textit{ultrapower of $X$ with respect to $\mathcal U$} is
the Banach space
$$X_\mathcal U:=\ell_\infty(I,X)/N_\mathcal U.$$
We will naturally identify a bounded function $f\colon I\longrightarrow X$ with the element $(f(i))_{i\in I}$. In this way, we denote by $[x_i]_{\mathcal U,i}$ or simply by $[x_i]_\mathcal U$, if no confusion is possible, the coset in $X_\mathcal U$ given by $(x_i)_{i\in I}+N_\mathcal U$.

From the definition of the quotient norm, it is not difficult to prove that $\Vert [x_i]_\mathcal U\Vert=\lim_\mathcal U \Vert x_i\Vert$ for every $[x_i]_\mathcal U\in X_\mathcal U$. This implies that the canonical inclusion $j:X\longrightarrow X_\mathcal U$ given by the equation
$$j(x):=[x]_\mathcal U$$
is an into linear isometry. It is known (c.f. e.g. Propositions 6.1 and 6.2 in \cite{hein80}) that $X_\mathcal U$ is finitely representable in $j(X)$. However, from an inspection of the proofs and the operators used there it can be concluded that $j(X)$ is indeed an almost isometric ideal in $X_\mathcal U$ (see definition below).

Observe that, thanks to \cite[Remark 2.4]{mrz26}, given a ultrapower space $X_\mathcal U$  for some free ultrafilter $\mathcal U$ over an infinite set $I$ and given $x\in B_{X_\mathcal U}$, we can always choose $x_i\in B_{X}$ such that $x=[x_i]$, that is, we can always select representatives where every element is in the unit ball. Moreover, an inspection in the proof reveals that the same happens if we replace the unit ball with the unit sphere. We will make use of these facts throughout the text without any further mention.

We refer the reader to \cite[Section 4.1]{sepibook} for background on ultrapowers of Banach spaces.

\subsection{Spaces of universal disposition and transfinite (almost) isometric ideals}

We say that a Banach space $X$ is \textit{of universal disposition (resp. almost universal disposition) for density $\kappa$} ($\text{(A)UD}_{<\kappa}$ in short) if for any two normed spaces $Y\subseteq Z$ with $\dens(Z)< \kappa$ and any isometry $t:Y\longrightarrow X$ there exists (for every $\varepsilon>0$) an extension $T:Z\longrightarrow X$ which is an isometry (respectively such that $(1-\varepsilon)\Vert z\Vert\le \Vert T(z)\Vert\le (1+\varepsilon)\Vert z\Vert$ holds for every $z\in Z$). We refer the reader to \cite{gk11} and references therein for background about spaces of (almost) universal disposition.

In the papers \cite{almr26,mrz25} it was proved that (A)UD$_{<\kappa}$ Banach spaces satisfy transfinite geometric properties like transfinite Daugavet, octahedrality or almost squareness. These results were obtained by providing a useful characterisation of the above spaces in terms of the notion of \textit{transfinite (almost) isometric ideal} from \cite[Definition 4.1]{mrz25}: given a Banach space $X$, a subspace $Y$ of $X$ and an infinite cardinal $\kappa$, we say that $Y$ is a \textbf{$\kappa$} (almost) isometric ideal in $X$ if given any subspace $E$ of $X$ with $\dens(E) <\kappa$ (and $\eps >0$), there exists an ($\eps$-)isometry $T: E \ra Y$ that preserves the points of $E \cap Y$. 

Observe that the above notion is a generalisation of the notion of \textit{almost isometric ideal} from \cite[Definition 1.3]{aln2}, which in the above language is nothing but $\omega_0$ almost isometric ideals. 

Observe also that a Banach space $X$ is a (A)UD$_{<\kappa}$ if, and only if, $X$ is a $\kappa$ (almost) isometric ideal in every Banach space that contains it \cite[Theorem 4.2]{mrz25}. 

The above result, far from anecdotic, reveals a powerful tool to study properties of Banach spaces in spaces of universal disposition. Indeed, if we consider any property of Banach spaces (P) satisfying the following two conditions:
\begin{enumerate}
    \item Given any Banach space $X$ there exists a Banach space $Y$ enjoying property (P) and $X\subseteq Y$ and;
    \item If $X$ is a Banach space with property (P) and $Y\subseteq X$ is a $\kappa$ (almost) isometric ideal in $X$ then $Y$ also satisfies the property (P),
\end{enumerate}
then any space of (A)UD$_{<\kappa}$ satisfies property (P). This is the way in which it was proved that if $X$ is a (A)UD$_{<\kappa}$ Banach space then $X$ has the $\kappa$-rigid Daugavet property (respectively $\kappa$-Daugavet property) in \cite[Theorem 8.4]{almr26} (the case of $\kappa=\omega_0$ was proved in \cite[Corollary 4.5]{aln2} with the same idea). We will also exploit this idea in Section~\ref{section:espadispouni}.

\section{Necessary and sufficient conditions}\label{sect:necesufi}

In this section we pursue to look for a sufficient condition on a Banach space $X$ which guarantee that $L(Y,X)$ is $<\kappa$ octahedral for every Banach space $Y$. To this end, we will begin with the following result, where we will obtain a necessary condition under the assumption that $Y$ is uniformly convex.

\begin{theorem}\label{theo:condinece}
Let $X$ and $Y$ be two Banach spaces and $\kappa$ be an uncountable cardinal. Let $H$ be a linear subspace of $L(Y,X)$ containing $F(Y,X)$. Then:
\begin{enumerate}
    \item If $H$ is $\kappa$-octahedral and $Y$ is uniformly convex, then $X$ satisfies the following condition: for every subspace  $Z\subseteq X$ with $\dens(Z)<\kappa$, every subspace $W\subseteq Y$ with $\dens(W)<\kappa$ and every $\varepsilon>0$ there exists $\phi\in B_{L(Y,X)}$ satisfying that
    $$\Vert z+\phi(w)\Vert\geq (1-\varepsilon)(\Vert z\Vert+\Vert w\Vert)$$
    holds for every $z\in Z$ and $w\in W$. In particular $\phi_{|W}$ is an $\frac{1}{1-\varepsilon}$-isometry.

    \item If $H$ $\kappa$-rigid octahedral and every $y\in S_Y$ is a strongly exposed point (in particular, such happens if $Y$ is LUR), then $X$ satisfies the following condition: for every subspace  $Z\subseteq X$ with $\dens(Z)<\kappa$ and every subspace $W\subseteq Y$ with $\dens(W)<\kappa$ there exists $\phi\in B_{L(Y,X)}$ satisfying that
    $$\Vert z+\phi(w)\Vert=\Vert z\Vert+\Vert w\Vert$$
    holds for every $z\in Z$ and $w\in W$. In particular $\phi_{|W}$ is an into isometry.
\end{enumerate}
\end{theorem}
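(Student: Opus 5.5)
The plan is to apply the octahedrality of $H$ to a family of rank-one operators built from dense subsets of $Z$ and $W$. The resulting operator $T$ is then tested along the points of $W$, and uniform convexity of $Y$ converts this into the required inequality.

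Fix dense subsets $\{z_\alpha\}_{\alpha<\kappa'}\subseteq S_Z$ and $\{w_\gamma\}_{\gamma<\kappa''}\subseteq S_W$ with $\kappa',\kappa''<\kappa$. For each $\gamma$, pick $y^*_\gamma\in S_{Y^*}$ with $y^*_\gamma(w_\gamma)=1$. Consider the rank-one operators
$$T_{\alpha,\gamma}:=y^*_\gamma\otimes z_\alpha\in S_{F(Y,X)}\subseteq S_H.$$
There are fewer than $\kappa$ of them; this uses that $\kappa$ is uncountable, so a product of two cardinals below $\kappa$ is still below $\kappa$ (if one is finite, pad the family). Apply $\kappa$-octahedrality of $H$ with a small $\delta>0$ to get $T\in S_H$ with $\Vert T_{\alpha,\gamma}+T\Vert\ge 2-\delta$ for all $\alpha,\gamma$. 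Set $\phi:=T$, so $\phi\in B_{L(Y,X)}$.

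The key step is to show that $\Vert z_\alpha+T(w_\gamma)\Vert\ge 2-\eta(\delta)$, where $\eta(\delta)\to0$ as $\delta\to0$. From $\Vert T_{\alpha,\gamma}+T\Vert\ge 2-\delta$ there is $y\in S_Y$ with
$$\Vert y^*_\gamma(y)z_\alpha+T(y)\Vert\ge 2-2\delta.$$
This forces $y^*_\gamma(y)\ge 1-2\delta$, possibly after replacing $y$ by $-y$. Here one must handle signs: a priori the inequality only gives $|y^*_\gamma(y)|\ge 1-2\delta$. If $y^*_\gamma(y)<0$, then $-y$ satisfies $\Vert z_\alpha-T(y)\Vert$ large, which is not what we want. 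To fix this, also include $y^*_\gamma\otimes(-z_\alpha)$ in the family; this does not help directly either. It is cleaner to note $\Vert |y^*_\gamma(y)|z_\alpha+\mathrm{sgn}\cdot T(y)\Vert$, so we may take $y':=\mathrm{sgn}(y^*_\gamma(y))\,y$ with $y^*_\gamma(y')\ge1-2\delta$ and $\Vert z_\alpha+T(y')\Vert\ge 2-4\delta$. Then $\Vert (y'+w_\gamma)/2\Vert\ge 1-\delta$, and uniform convexity gives $\Vert y'-w_\gamma\Vert\le\eta(\delta)$. Since $\Vert T\Vert\le1$, it follows that $\Vert z_\alpha+T(w_\gamma)\Vert\ge 2-4\delta-\eta(\delta)$.

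Now pass from unit vectors to general ones. The standard lemma is that if $\Vert u+v\Vert\ge 2-\theta$ with $\Vert u\Vert,\Vert v\Vert\le1$, then $\Vert au+bv\Vert\ge(1-\theta)(a+b)$ for $a,b\ge0$. Applying this to $u=z_\alpha$ and $v=T(w_\gamma)$ gives $\Vert z+T(w)\Vert\ge(1-\varepsilon)(\Vert z\Vert+\Vert w\Vert)$ whenever $z,w$ are nonnegative multiples of the $z_\alpha,w_\gamma$. By density and continuity this extends to all $z\in Z$, $w\in W$, with $\varepsilon$ adjusted. Taking $z=0$ gives $\Vert\phi(w)\Vert\ge(1-\varepsilon)\Vert w\Vert$, which is the isometry claim.

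For part (2), take $\delta=0$ and choose each $y^*_\gamma$ to strongly expose $w_\gamma$. Rigid octahedrality gives $\Vert T_{\alpha,\gamma}+T\Vert=2$, so there is a sequence $y_n$ with $y^*_\gamma(y_n)\to1$ (after the sign fix) and $\Vert z_\alpha+T(y_n)\Vert\to2$. Strong exposedness gives $y_n\to w_\gamma$, hence $\Vert z_\alpha+T(w_\gamma)\Vert=2$. Equality $\Vert u+v\Vert=\Vert u\Vert+\Vert v\Vert$ for all positive combinations then follows by the same lemma with $\theta=0$. Density extends this to $\Vert z+T(w)\Vert=\Vert z\Vert+\Vert w\Vert$ on $\lspan$ of positive cones only, so I must take the dense sets symmetric, containing $\pm$ each element. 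I expect this sign and density bookkeeping to be the main obstacle: getting all $z\in Z$, $w\in W$ from the dense sets requires care that the sign choice of $y'$ does not break the orientation of $T(w_\gamma)$.
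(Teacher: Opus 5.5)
Your proposal is correct and follows essentially the same route as the paper: rank-one operators $y^*_\gamma\otimes z_\alpha$ built from dense subsets of $S_W$ and $S_Z$, and a norming vector flipped in sign so that $y^*_\gamma(y)\geq 0$. Uniform convexity (resp.\ strong exposedness) then lets you replace it by $w_\gamma$, and homogenization plus density finishes; you merely make explicit the homogenization lemma the paper calls ``not difficult''. The sign bookkeeping you flag at the end is not a real obstacle, for two reasons: the flip $y\mapsto -y$ leaves $\Vert (T_{\alpha,\gamma}+T)(y)\Vert$ unchanged and the flipped vectors converge to $w_\gamma$ itself, and since $S_Z$ and $S_W$ are symmetric, nonnegative multiples of any dense subset of these spheres are already dense in $Z$ and $W$, so no symmetrization is needed.
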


\begin{proof}
For the proof of (1) take $\varepsilon>0$ and $Z\subseteq X$ and $W\subseteq Y$ such that $\max\{\dens(Z),\dens(W)\}<\kappa$, and select an infinite cardinal $\kappa'<\kappa$ such that $\max\{\dens(W),\dens(Z)\}\leq \kappa'$. Take a dense subset $\{z_\alpha: \alpha<\kappa'\}\subseteq Z$ of $S_Z$ and a dense subset $\{w_\alpha: \alpha<\kappa'\}\subseteq S_W$ of $S_W$. Since $Y$ is uniformly convex select $\delta>0$ with the property that if $y,z\in B_Y$ satisfy $\Vert y+z\Vert>2-\delta$ then $\Vert y-z\Vert<\frac{\varepsilon}{8}$. We can assume that $\delta<\frac{\varepsilon}{8}$.  

For every $\alpha,\beta<\kappa'$ consider the norm-one operator $T_{\alpha,\beta}:=f_\alpha\otimes z_\beta\in F(Y,X)\subseteq H$, where $f_\alpha\in S_{Y^*}$ is such that $f_\alpha(w_\alpha)=1$. Since $H$ is assumed to be $<\kappa$-octahedral we can find an operator $\phi\in S_H$ such that
$$\Vert T_{\alpha,\beta}+\phi\Vert>2-\delta\ \forall \alpha,\beta<\kappa'.$$
By the definition of the operator norm select, for every $\alpha,\beta<\kappa'$, an element $y_{\alpha,\beta}\in S_Y$ such that $\Vert T_{\alpha,\beta}(y_{\alpha,\beta})+\phi(y_{\alpha,\beta})\Vert>2-\delta$. Taking into account the definition of $T_{\alpha,\beta}$ we get that
\[\begin{split}
2-\delta<\Vert T_{\alpha,\beta}(y_{\alpha,\beta})+\phi(y_{\alpha,\beta})\Vert& =\Vert f_\alpha(y_{\alpha,\beta})z_\beta+\phi(y_{\alpha,\beta})\Vert\\
& \leq \vert f_\alpha(y_{\alpha,\beta})\vert \Vert z_\beta\Vert+\Vert\phi(y_{\alpha,\beta})\Vert\\
& \leq \vert f_\alpha(y_{\alpha,\beta})\vert+1.
\end{split}\]
Up to replacing $y_{\alpha,\beta}$ with its opposite we can assume without loss of generality that $f_\alpha(y_{\alpha,\beta})\geq 0$. Consequently, from the above inequality we infer that $f_\alpha(y_{\alpha,\beta})>1-\delta$, so
$$2-\delta<f_\alpha(y_{\alpha,\beta}+w_\alpha)\leq \Vert y_{\alpha,\beta}+w_\alpha\Vert.$$
By the condition on $\delta$ we get that $\Vert y_{\alpha,\beta}-w_\alpha\Vert<\frac{\varepsilon}{8}$. Now
\[\begin{split}
2-\delta<\Vert T_{\alpha,\beta}(y_{\alpha,\beta})+\phi(y_{\alpha,\beta})\Vert& \leq \Vert T_{\alpha,\beta}(w_{\alpha})+\phi(w_{\alpha})\Vert+\Vert T_{\alpha,\beta}+\phi\Vert\Vert y_{\alpha,\beta}-w_\alpha\Vert\\
& <\Vert T_{\alpha,\beta}(w_\alpha)+\phi(w_\alpha)\Vert+\frac{\varepsilon}{4}.
\end{split}
\]
Thus $\Vert T_{\alpha,\beta}(w_\alpha)+\phi(w_\alpha)\Vert>2-\delta-\frac{\varepsilon}{4}>2-\frac{\varepsilon}{2}$ holds for every $\alpha<\kappa'$. Consequently, given $\beta<\kappa'$, for any $\alpha<\kappa'$ we have
\[\begin{split}
2-\frac{\varepsilon}{2}<\Vert T_{\alpha,\beta}(w_\alpha)+\phi(w_\alpha)\Vert=\Vert z_\beta+\phi(w_\alpha)\Vert.    
\end{split}\]
Taking into account that $\{w_\alpha:\alpha<\kappa'\}$ is dense in $S_W$ and $\{z_\beta: \beta<\kappa'\}$ is dense in $S_Z$ we get that
$$2-\frac{\varepsilon}{2}\leq \Vert z+\phi(w)\Vert$$
holds for every $z\in S_Z, w\in S_W$. From there it is not difficult to prove that
$$\Vert z+\phi(w)\Vert\geq (1-\varepsilon)(\Vert z\Vert+\Vert w\Vert)$$
holds for every $z\in Z$ and $w\in W$.

The proof of (2) follows similar lines: select an infinite cardinal $\kappa'<\kappa$ such that $\max\{\dens(W),\dens(Z)\}\leq \kappa'$ and let $\{z_\alpha: \alpha<\kappa'\}\subseteq Z$ be a dense subset of $S_Z$ and $\{w_\alpha: \alpha<\kappa'\}\subseteq S_W$ be a dense subset of $S_W$. For every $\alpha<\kappa'$ pick a functional $f_\alpha\in S_{Y^*}$ which strongly exposes $w_\alpha$. 

As before, for every $\alpha,\beta<\kappa'$ consider the norm-one operator $T_{\alpha,\beta}:=f_\alpha\otimes z_\beta\in F(Y,X)\subseteq H$. Since $H$ is assumed to be $<\kappa$-rigid octahedral we can find an operator $\phi\in S_H$ such that
$$\Vert T_{\alpha,\beta}+\phi\Vert=2\ \forall \alpha,\beta<\kappa'.$$
Pick any $\alpha,\beta<\kappa'$. By the definition of the operator norm select a sequence $(y_n)\subseteq S_Y$ such that $\Vert T_{\alpha,\beta}(y_n)+\phi(y_n)\Vert\rightarrow 2$. This implies that
$$\vert f_\alpha(y_n)\vert=\Vert f_\alpha(y_n)z_\beta\Vert =\Vert T_{\alpha,\beta}(y_n)\Vert\rightarrow 1.$$
Assume WLOG that $f_\alpha(y_n)\geq 0$ for every $n$, so $f_\alpha(y_n)\rightarrow 1$. Since $f_\alpha$ strongly exposes $w_\alpha$ we infer that $(y_n)\rightarrow w_\alpha$ in the norm topology. Now the above convergence implies $\{\Vert T_{\alpha,\beta}(y_n)+\phi(y_n)\Vert\}\rightarrow \Vert T_{\alpha,\beta}(w_\alpha)+\phi(w_\alpha)\Vert$. By the uniqueness of limit we have
$$\Vert T_{\alpha,\beta}(w_\alpha)+\phi(w_\alpha)\Vert=2.$$
Since $\alpha,\beta<\kappa'$ were arbitrary, following the proof of (1) we infer that
$$\Vert z+\phi(w)\Vert=\Vert z\Vert+\Vert w\Vert$$
holds for every $z\in Z$ and $w\in W$, as desired.
\end{proof} 

Let us make some comments on the above result.

\begin{remark}\label{remark:condinece}
\begin{enumerate}
    \item The above result for $\kappa$ octahedrality is an extension of \cite[Lemma 3.7]{llr2}, where it was proved that if $L(Y,X)$ is octahedral and $Y$ is uniformly convex then $Y$ is finitely representable in $X$. In Theorem~\ref{theo:condinece} (1) we obtain that every subspace of $Y$ of density $<\kappa$ is $(1+\varepsilon)$ embeddable in $X$, which naturally yields a transfinite extension of finite representability.
    \item For $\kappa=\omega_0$ it is not difficult to see that (1) does remains true making of compactness of the unit sphere of finite dimensional Banach spaces and working with finite nets in the unit sphere (see for instance the proof of \cite[Lemma 3.2]{rue23}). However, for $\kappa=\omega_0$ we do not know whether (2) holds true. With our techniques, we can prove that under the asumptions of (2) then $X$ satisfies the following: for every finite set $F\subseteq X$ and every finite set $G\subseteq Y$ there exists $\phi\in B_{H}$ such that
    $$\Vert f+\phi(g)\Vert=\Vert f\Vert+\Vert g\Vert$$
    holds for every $f\in F$ and every $g\in G$. Whether we can replace the finite sets $F$ and $G$ with finite dimensional subspaces $W\subseteq Y$ and $Z\subseteq X$ is unclear to the author.
    \item Examples of Banach spaces $X$ satisfying the thesis of (2) in Theorem~\ref{theo:condinece} for $\omega_1$ are $X=L_\infty(\mu)$ for $\mu$ atomless and $X=\Lip(M)$ for any length metric space $M$. The case of $L_\infty(\mu)$ follows since $L_1(\mu)\pten Y=L_1(\mu,Y)$ has the Daugavet property for every Banach space $Y$ as $\mu$ being atomless (c.f. e.g. \cite[Theorem 3.4.4]{kmrzw25}). An application of \cite[Lemma 2.12]{kssw} yields that $(Y\pten L_1(\mu))^*=L(Y,L_1(\mu)^*)=L(Y,L_\infty(\mu))$ is $\omega_1$-rigid octahedral. The case of $X=\Lip(M)$ follows with the same argument because $\mathcal F(M)\pten Y$ has the Daugavet property for every Banach space $Y$ by \cite[Corollary 3.2]{meru25}, then the $\omega_1$-octahedrality of $(Y\pten \mathcal F(M))^*=L(Y,\mathcal F(M)^*)=L(Y,\Lip(M))$ follows again by \cite[Lemma 2.12]{kssw}.
\end{enumerate}
\end{remark}

As a direct consequence we obtain the following result.

\begin{corollary}\label{cor:notensorlargeoh}
Given any infinite-dimensional uniformly convex Banach space $Y$, it follows that $K(Y,X)$ fails to be $\omega_1$-octahedral. The same conclussion holds replacing $K(Y,X)$ with $Y\iten X$.
\end{corollary}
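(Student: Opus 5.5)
Argue by contradiction and apply Theorem~\ref{theo:condinece}(1) with $H:=K(Y,X)$, which contains $F(Y,X)$, and $\kappa=\omega_1$. If $K(Y,X)$ were $\omega_1$-octahedral, then for every separable subspace $W\subseteq Y$, every separable $Z\subseteq X$ and every $\varepsilon>0$ there would be $\phi\in B_{L(Y,X)}$ with
$$\Vert z+\phi(w)\Vert\geq (1-\varepsilon)(\Vert z\Vert+\Vert w\Vert)\quad\forall z\in Z,\ w\in W.$$
The key observation is that $\phi$ is produced inside $S_H$, so $\phi$ is compact. Inspecting the proof, the $\phi$ obtained there is the octahedral witness in $S_H$, so we may take $\phi\in K(Y,X)$.

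Since $Y$ is infinite-dimensional, choose an infinite-dimensional separable subspace $W\subseteq Y$, and take $Z=\{0\}$ and $\varepsilon=1/2$. The conclusion gives $\Vert\phi(w)\Vert\geq \tfrac12\Vert w\Vert$ for all $w\in W$, so $\phi_{|W}$ is an isomorphic embedding of the infinite-dimensional space $W$. But then $\phi(B_W)$ contains the image of the unit ball of an infinite-dimensional space under an isomorphism, hence is not relatively compact by Riesz's lemma. This contradicts the compactness of $\phi$. Hence $K(Y,X)$ is not $\omega_1$-octahedral.

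For $Y\iten X$, identify $Y\iten X$ isometrically with a subspace of $K(Y^*,X)$ if needed. Alternatively, and more simply, use that $Y$ is uniformly convex and hence reflexive, so $Y^*\iten X$ is the closure of $F(Y,X)$ in $K(Y,X)$. Since $Y^*$ is also infinite-dimensional and uniformly convex, apply the same argument with $H:=\overline{F(Y,X)}\subseteq K(Y,X)$, which contains $F(Y,X)$ and consists of compact operators. This gives $Y^*\iten X$ not $\omega_1$-octahedral. Replacing $Y$ by $Y^*$, whose dual is $Y$, gives the claim for $Y\iten X\cong \overline{F(Y^*,X)}$.

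The main obstacle is justifying that the witness $\phi$ in Theorem~\ref{theo:condinece}(1) lies in $H$ and not merely in $B_{L(Y,X)}$. The proof shows this, since $\phi\in S_H$ comes directly from the octahedrality of $H$. With that in hand, everything reduces to the standard fact that a compact operator cannot be bounded below on an infinite-dimensional subspace.
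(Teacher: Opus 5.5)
Your argument for $K(Y,X)$ is correct and is the paper's proof. Theorem~\ref{theo:condinece}(1) with $H=K(Y,X)$ produces a witness $\phi\in S_H$, hence compact, which is bounded below on an infinite-dimensional separable $W\subseteq Y$, and that is impossible. One small point: take $Z$ one-dimensional rather than $\{0\}$, because the proof of that theorem builds the operators $T_{\alpha,\beta}$ from $S_Z$; then put $z=0$ in the resulting inequality.

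The gap is in the $Y\iten X$ part. The step ``replace $Y$ by $Y^*$'' needs $Y^*$ to be uniformly convex, and that is false in general. The dual of a uniformly convex space is uniformly smooth, and $Y^*$ is uniformly convex exactly when $Y$ is uniformly smooth. For example, let $E$ be $\mathbb R^2$ with a strictly convex but non-smooth norm. Then $Y=\ell_2\oplus_2 E$ is uniformly convex, while $Y^*=\ell_2\oplus_2 E^*$ contains the non-strictly convex space $E^*$. So Theorem~\ref{theo:condinece}(1) cannot be applied to $H=\overline{F(Y^*,X)}=Y\iten X$. What you have actually proved is that $Y^*\iten X=\overline{F(Y,X)}$ fails $\omega_1$-octahedrality. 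The paper's one-line proof likewise only treats subspaces of $K(Y,X)$ containing $F(Y,X)$. This agrees with the literal claim when $Y$ is also uniformly smooth, e.g.\ $Y=\ell_p$ with $1<p<\infty$.

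To get the statement for $Y\iten X$ itself, use the uniform convexity of $Y$ on the range side instead.
\begin{itemize}
\item Regard $u\in Y\iten X$ as the compact operator $R_u\colon X^*\to Y$, with $\Vert u\Vert=\Vert R_u\Vert$.
\item Fix $z\in S_X$ and a $\frac12$-separated sequence $(y_n)\subseteq S_Y$ (Riesz's lemma), and set $T_n=y_n\otimes z\in S_{Y\iten X}$.
\item Suppose $u\in S_{Y\iten X}$ satisfies $\Vert T_n+u\Vert>2-\varepsilon$ for all $n$. Pick $x_n^*\in B_{X^*}$ with $\Vert x_n^*(z)y_n+R_u(x_n^*)\Vert>2-\varepsilon$.
\item Then $\vert x_n^*(z)\vert>1-\varepsilon$. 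With $s_n$ the sign of $x_n^*(z)$, this gives $\Vert s_ny_n+R_u(x_n^*)\Vert>2-2\varepsilon$.
\item Uniform convexity of $Y$ now gives $\Vert y_n-s_nR_u(x_n^*)\Vert<\eta(\varepsilon)$, where $\eta(\varepsilon)\to 0$ as $\varepsilon\to0$.
\item So every $y_n$ lies within $\eta(\varepsilon)$ of the totally bounded set $R_u(B_{X^*})\cup(-R_u(B_{X^*}))$.
\item Cover this set by finitely many balls of radius $\eta(\varepsilon)$. By pigeonhole, two distinct $y_n$ are at distance $<4\eta(\varepsilon)$, which contradicts the separation once $4\eta(\varepsilon)\le\frac12$.
\end{itemize}
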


\begin{proof}
If there existed $X$ such that $K(Y,X)$ was $\omega_1$-octahedral, then Theorem~\ref{theo:condinece} would imply the existence of $\phi\in K(Y,X)$ such that $\phi$ is an into isomorphism, which is clearly impossible if $Y$ is infinite dimensional.
\end{proof}

A kind of converse of Theorem~\ref{theo:condinece} can be obtained for $L(Y,X)$ in the following terms. 

\begin{theorem}\label{theo:condisufi}
Let $X$ be a Banach space and $\kappa$ be an uncountable cardinal. Then:
\begin{enumerate}
\item Assume that for every Banach space $Y$ such that $\dens(Y)<\kappa$, every subset $G\subseteq X$ of cardinality smaller than $\kappa$ and every $\varepsilon>0$ there exists an operator $\phi:Y\longrightarrow X$ satisfying that 
$$\Vert z+\phi(y)\Vert\geq (1-\varepsilon)(\Vert z\Vert+\Vert y\Vert)$$
holds for every $y\in Y, z\in G$.

Then $L(Y,X)$ is $\kappa$-octahedral. 

\item Assume that for every Banach space $Y$ such that $\dens(Y)<\kappa$ and any subset $G\subseteq X$ of cardinality smaller than $\kappa$ there exists an operator $\phi:Y\longrightarrow X$ satisfying that 
$$\Vert z+\phi(y)\Vert=\Vert z\Vert+\Vert y\Vert$$
holds for every $y\in Y, z\in G$.

Then $L(Y,X)$ is $\kappa$-rigid octahedral. 
\end{enumerate}
\end{theorem}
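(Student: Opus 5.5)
The plan is to read the statement with $Y$ ranging over the Banach spaces of density $<\kappa$ covered by the hypothesis. Fix such a $Y$, a cardinal $\kappa'<\kappa$, a family $\{T_\alpha:\alpha<\kappa'\}\subseteq S_{L(Y,X)}$ and $\varepsilon>0$. I would look for a norm-one $\phi\in L(Y,X)$ with $\Vert T_\alpha+\phi\Vert\geq 2-\varepsilon$ for every $\alpha<\kappa'$, in the rigid case with equality $2$. As in the proof of Theorem~\ref{theo:condinece}, the idea is to norm each $T_\alpha$ at one point and then reduce operator norms to norms of vectors in $X$.

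\emph{Step 1.} For each $\alpha<\kappa'$ choose $y_\alpha\in S_Y$ with $\Vert T_\alpha(y_\alpha)\Vert>1-\varepsilon$, and put $z_\alpha:=T_\alpha(y_\alpha)$. The set $G:=\{z_\alpha:\alpha<\kappa'\}\subseteq X$ has cardinality $<\kappa$.

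\emph{Step 2.} Apply the hypothesis of (1) to $Y$, to $G$ and to $\varepsilon$. This gives $\phi:Y\to X$ with
$$\Vert z_\alpha+\phi(y)\Vert\geq(1-\varepsilon)(\Vert z_\alpha\Vert+\Vert y\Vert)$$
for all $y\in Y$ and $\alpha<\kappa'$. Taking $y=y_\alpha$ yields
$$\Vert T_\alpha+\phi\Vert\geq\Vert T_\alpha(y_\alpha)+\phi(y_\alpha)\Vert\geq(1-\varepsilon)(2-\varepsilon).$$
Since $\varepsilon$ is arbitrary, this gives the octahedral estimate once $\phi$ is normalised.

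In the rigid case (2) a single point $y_\alpha$ only gives $2-\varepsilon$, so I would use a whole norming sequence instead. For each $\alpha$ pick $y_{\alpha,n}\in S_Y$ with $\Vert T_\alpha(y_{\alpha,n})\Vert\to 1$, and set $G:=\{T_\alpha(y_{\alpha,n}):\alpha<\kappa',\,n\in\mathbb N\}$. This set still has cardinality $<\kappa$ because $\kappa$ is uncountable. The exact equality in the hypothesis then gives $\Vert T_\alpha+\phi\Vert\geq\sup_n(\Vert T_\alpha y_{\alpha,n}\Vert+1)=2$.

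\emph{Step 3: normalisation, the main obstacle.} The hypothesis bounds $\phi$ from below, since taking $z=0$ (harmless to add to $G$) gives $\Vert\phi(y)\Vert\geq(1-\varepsilon)\Vert y\Vert$. It gives no upper bound on $\Vert\phi\Vert$, and dividing by a large $\Vert\phi\Vert$ would destroy the estimate. I would first check whether the intended hypothesis is $\phi\in B_{L(Y,X)}$, as in the conclusion of Theorem~\ref{theo:condinece}. In that case $1-\varepsilon\leq\Vert\phi\Vert\leq1$, and replacing $\phi$ by $\phi/\Vert\phi\Vert$ costs at most $\varepsilon$ in Step 2 and nothing in the rigid case. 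If no such bound is available, I would try to extract one by adding to $G$ the vectors $-\phi_0(y)$ for a dense set of $y$. Since $\phi$ is not known in advance, this would need an iterative or closure argument, which I expect to be delicate.

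For a $Y$ of density $\geq\kappa$ there is a further difficulty. I would try to factor through $R:Y\to\ell_\infty(\kappa')$, $R(y)=(f_\alpha(y))_\alpha$ with $f_\alpha(y_\alpha)=1$, but the range of $R$ can have density larger than $\kappa'$, so the hypothesis does not apply directly. I therefore regard the restriction $\dens(Y)<\kappa$ as part of the intended reading of the statement.
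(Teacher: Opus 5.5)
Your Steps 1--2 follow the paper's argument. You norm each $T_\alpha$ at one point or along a sequence, collect the images in $G$, apply the hypothesis, and evaluate $T_\alpha+\phi$ at those points. Using a single point in (1) and sequences only in (2) is a harmless simplification. Your normalisation worry is the one substantive issue. The paper's proof ignores it: it proves (2), calls (1) ``completely similar'', and declares that ``$T=\phi$ does the trick'' without checking $\Vert\phi\Vert=1$. The issue resolves differently in the two parts.

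For (2) there is no obstacle, so you should not leave it conditional. Put $z=0$ into $G$, or fix any $z\in G$, replace $y$ by $ty$, divide by $t$ and let $t\to\infty$. The exact identity then gives $\Vert\phi(y)\Vert=\Vert y\Vert$. So $\phi$ is an isometry, $\Vert\phi\Vert=1$ automatically, and your sequence argument proves (2) as stated.

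For (1), no iterative or closure argument can supply the missing upper bound, because (1) is false as literally stated. Take $\kappa=\omega_1$ and $X=\mathbb R\oplus_\infty c_0(\omega_1,C[0,1])$. Given a countable $G$, a separable $Y$ and $\varepsilon>0$, pick:
\begin{enumerate}
\item an index $\gamma_0$ outside the countable union of the supports of the elements of $G$;
\item an isometric embedding $\iota\colon Y\to C[0,1]$;
\item the operator $\phi(y):=\varepsilon^{-1}\iota(y)$ placed in coordinate $\gamma_0$.
\end{enumerate}
Then $\Vert z+\phi(y)\Vert=\max\{\Vert z\Vert,\varepsilon^{-1}\Vert y\Vert\}\geq(1-\varepsilon)\Vert z\Vert+\Vert y\Vert$, so the hypothesis of (1) holds. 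Yet $L(\mathbb R,X)=X$ is not even octahedral: for $x_\pm=(\pm1,0)$ and any $y\in S_X$, one of $\Vert x_\pm+y\Vert$ is at most $1$.

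So the hypothesis of (1) must be read with $\phi\in B_{L(Y,X)}$. This matches the conclusion of Theorem~\ref{theo:condinece}, and the paper tacitly assumes it later, e.g.\ in Proposition~\ref{prop:heretransaii} and when it calls $\phi\circ P$ norm-one in Theorem~\ref{theo:linftykappaoh}. Under that reading your bound $1-\varepsilon\le\Vert\phi\Vert\le 1$ finishes (1). Your restriction to $Y$ of density $<\kappa$ is also the intended one, as Remark~\ref{remark:condisufi} confirms.
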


\begin{proof}
Let us prove (2), being the proof of (1) completely similar. To this end, let $\kappa'<\kappa$ and let $\{T_\alpha:\alpha<\kappa'\}\subseteq S_{L(Y,X)}$, and let us find $T\in S_{L(Y,X)}$ such that $\Vert T_\alpha+T\Vert=2$ for every $\alpha$.

In order to do so, given $\alpha<\kappa'$ and $n\in\mathbb N$, find $y_\alpha^n\in S_X$ such that $\Vert T_\alpha(y_\alpha^n)\Vert>1-\frac{1}{n}$. Set $G:=\{T_\alpha(y_\alpha^n):n\in\mathbb N, \alpha<\kappa'\}\subseteq X$. Since the $\vert G\vert<\kappa$ we get by the assumption an operator $\phi:Y\longrightarrow X$ such that
$$\Vert z+\phi(y)\Vert=\Vert z\Vert+\Vert y\Vert$$
holds for every $y\in Y$ and $z\in G$. Now, given $\alpha<\kappa'$ and $n\in\mathbb N$ we get, in particular that
$$\Vert T_\alpha+T\Vert\geq \Vert T_\alpha(y_\alpha^n)+T(y_\alpha^n)\Vert= \Vert T_\alpha(y_\alpha^n)\Vert+\Vert y_\alpha^n\Vert=\Vert T_\alpha(y_\alpha^n)\Vert+1>2-\frac{1}{n}.$$
The arbitrariness of $n$ implies $\Vert T_\alpha+\phi\Vert=2$, and the arbitrariness of $\alpha$ shows that $T=\phi$ does the trick.
\end{proof}

Before going on the following remark is pertinent.

\begin{remark}\label{remark:condisufi}
In view of the statement in Theorem~\ref{theo:condinece} one may wonder whether we can get rid of the density condition of $\dens(Y)<\kappa$ in Theorem~\ref{theo:condisufi} by working with a suitable subspace $W\subseteq Y$ with $\dens(W)<\kappa$. Following the proof of (2), if we consider $W:=\overline{\spann}\{y_\alpha^n: n\in\mathbb N, \alpha<\kappa'\}$, then $W$ is a subspace of density $\leq \kappa'<\kappa$. However, the mere existence of an operator $\phi:W\longrightarrow X$ with the condition that
$$\Vert z+\phi(w)\Vert=\Vert z\Vert +\Vert w\Vert\ \forall z\in Z, w\in W$$
is not sufficient to guarantee that $L(Y,X)$ is $\kappa$-rigid octahedral since we would need to work with a norm-preserving extension $\Phi:Y\longrightarrow X$ of $\phi$ to define a vector in $L(Y,X)$. 

A similar problem arises when trying a similar extension of statement (1).
\end{remark}

The above remark suggests that, if we pursue to find examples of Banach spaces such that $L(Y,X)$ is $\kappa$-octahedral for every Banach spaces $Y$ taking advantages of the ideas behind the proof of Theorem~\ref{theo:condisufi}, a good idea is to request that $X$ satisfies the assumptions of this theorem for a Banach space $Y$ which is, on the one hand, universal for the class of Banach spaces of density character smaller than $\kappa$ and, on the other hand, with the property that every operator $t:Z\longrightarrow Y$ admits a norm-preserving extension $T:W\longrightarrow Y$ to any Banach space $W$ containing $Z$ (in short, that $Y$ is isometrically injective). 

The above intuition drives to the following definition.

\begin{definition}\label{def:linfikappaocta}
Let $X$ be a Banach space and let $\alpha,\beta$ be two infinite cardinals. We say that $X$ is $(\ell_\infty(\alpha),\beta)$-octahedral if every subset $Z\subseteq X$ with $\vert Z\vert<\beta$ and any $\alpha'<\alpha$ there exists an operator $\phi:\ell_\infty(\alpha')\longrightarrow X$ such that
$$\Vert z+\phi(y)\Vert\geq (1-\varepsilon)(\Vert z\Vert+\Vert y\Vert)$$
holds for every $z\in Z$ and every $y\in \ell_\infty(\alpha')$. 

If $X$ satisfies the above condition for $\varepsilon=0$ we say that $X$ is \textit{$(\ell_\infty(\alpha),\beta)$-rigid octahedral}.
\end{definition}

\begin{remark}
\begin{enumerate}
\item Recall that if $\alpha=\beta=\omega_0$ we arrive to the definition of universal octahedrality by \cite[Theorem 3.7]{rue23}.

\item Observe that if $\beta$ is uncountable or we consider $(\ell_\infty(\alpha),\omega_0)$-octahedrality, the above is equivalent to the corresponding property taking $Z$ as a subspace of $X$ with $\dens(Z)<\beta$ by easy density arguments ($\beta$ uncountable) or compactness arguments ($\beta=\omega_0$ in the non-rigid version) on $S_Z$. 
\end{enumerate}
\end{remark}

Now we have the result that we were looking for.

\begin{theorem}\label{theo:linftykappaoh}
Let $X$ be Banach space and let $\kappa,\alpha,\beta$ be infinite cardinals such that $\kappa\leq \min\{\alpha,\beta\}$. Then:
\begin{enumerate}
    \item If $X$ is $(\ell_\infty(\alpha),\beta)$-octahedral then $L(Y,X)$ is $\kappa$-octahedral for every Banach space $Y$.
    \item If $X$ is $(\ell_\infty(\alpha),\beta)$-rigid octahedral and $\alpha$ is uncountable then $L(Y,X)$ is $\kappa$-rigid octahedral for every Banach space $Y$.
\end{enumerate}
\end{theorem}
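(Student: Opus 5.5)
We follow the proof of Theorem~\ref{theo:condisufi}, using $\ell_\infty(\alpha')$ as an isometrically injective, universal intermediary to build the operator on all of $Y$. We treat (2); (1) is the same with $\varepsilon$ carried along.

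Fix $\kappa'<\kappa$ and $\{T_\gamma:\gamma<\kappa'\}\subseteq S_{L(Y,X)}$. For each $\gamma<\kappa'$ and $n\in\mathbb N$ pick $y_\gamma^n\in S_Y$ with $\Vert T_\gamma(y_\gamma^n)\Vert>1-\frac1n$. Put
$$G:=\{T_\gamma(y_\gamma^n):\gamma<\kappa',\ n\in\mathbb N\}\subseteq X,\qquad D:=\{y_\gamma^n:\gamma<\kappa',\ n\in\mathbb N\}\subseteq Y,\qquad W:=\clspan(D).$$
Then $|G|\le \kappa'\cdot\omega_0<\kappa\le\beta$. Moreover $\dens(W)\le\lambda:=\max\{\kappa',\omega_0\}$.

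The key construction is a norm-one operator $R:Y\to\ell_\infty(\lambda)$ that is isometric on $W$. Take a norming family $\{g_\xi:\xi<\lambda\}\subseteq S_{Y^*}$ for $W$: for a dense subset of $S_W$ of cardinality $\le\lambda$, choose Hahn--Banach norming functionals. Then define
$$R(y):=(g_\xi(y))_{\xi<\lambda}.$$
We have $\Vert R\Vert\le1$ and $\Vert R(w)\Vert=\Vert w\Vert$ for $w\in W$. Since we only need $\Vert R(y_\gamma^n)\Vert=1$, it even suffices to take one functional per $y_\gamma^n$, indexed by $D$, which avoids density considerations.

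We need $\lambda<\alpha$ in order to apply the definition with $\alpha'=\lambda$. This holds because $\kappa'<\kappa\le\alpha$, except in the case $\kappa'$ finite and $\alpha=\omega_0$. That case is exactly the countable one, and there $D$ can be made finite. Indeed, for finite $\kappa'$ in the non-rigid version we can fix a single $n$ with $\frac1n<\varepsilon$, so $D$ is finite and $R$ maps into $\ell_\infty(|D|)$ with $|D|<\omega_0$. In the rigid version, the hypothesis that $\alpha$ is uncountable allows $\alpha'=\omega_0$, which accommodates the countably many $y_\gamma^n$. This bookkeeping is the main obstacle, and it explains the extra assumption in (2).

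Now apply $(\ell_\infty(\alpha),\beta)$-(rigid) octahedrality to $G$ and $\alpha'=|D|$ (or $\lambda$). This gives $\phi:\ell_\infty(\alpha')\to X$ with
$$\Vert z+\phi(v)\Vert=\Vert z\Vert+\Vert v\Vert$$
for $z\in G$ and $v\in\ell_\infty(\alpha')$. Taking $z=0$ if needed (we may add $0$ to $G$), we get $\Vert\phi\Vert\le1$ and in fact $\phi$ is an isometry. Set $T:=\phi\circ R\in L(Y,X)$. Then $\Vert T\Vert\le1$, and $\Vert T(y_\gamma^n)\Vert=\Vert R(y_\gamma^n)\Vert=1$, so $T\in S_{L(Y,X)}$. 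Finally,
$$\Vert T_\gamma+T\Vert\ge\Vert T_\gamma(y_\gamma^n)+\phi(R y_\gamma^n)\Vert=\Vert T_\gamma(y_\gamma^n)\Vert+1>2-\tfrac1n,$$
so $\Vert T_\gamma+T\Vert=2$ for every $\gamma<\kappa'$.

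In the non-rigid case the same computation gives
$$\Vert T_\gamma+T\Vert\ge(1-\varepsilon)(2-\varepsilon).$$
The bound $\Vert T\Vert\le 1$ still holds because $\phi$ is a $\frac{1}{1-\varepsilon}$-isometry; if necessary, normalise $T$ by dividing by $\Vert T\Vert\in[1-\varepsilon,\tfrac{1}{1-\varepsilon}]$, which costs only an $O(\varepsilon)$ error.
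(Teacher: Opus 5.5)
Your strategy is the paper's: factor through $\ell_\infty(\alpha')$ by an operator $Y\to\ell_\infty(\alpha')$ that norms the test vectors, then compose with the $\phi$ given by the hypothesis. Defining $R$ coordinatewise by Hahn--Banach functionals at the points of $D$ is an explicit, pointwise version of the paper's ``embed $V$ into $\ell_\infty(\alpha')$ and extend by injectivity''. It also lets you skip the finite-representability-in-$c_0$ step when $\alpha=\omega_0$, and that part is fine.

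\textbf{The gap: the cardinality of $G$ in the rigid case.} You write $|G|\le\kappa'\cdot\omega_0<\kappa\le\beta$. For finite $\kappa'$ this asserts $\omega_0<\kappa$, which is false when $\kappa=\omega_0$.
\begin{itemize}
\item If $\beta$ is uncountable you still have $|G|\le\omega_0<\beta$, and the argument goes through.
\item But (2) also allows $\kappa=\beta=\omega_0$ with $\alpha$ uncountable. Then $(\ell_\infty(\alpha),\omega_0)$-rigid octahedrality can only be applied to finite $Z$. With finitely many test vectors you only get $\Vert T_\gamma+T\Vert>2-\frac{1}{n}$ for some $T$ depending on $n$, which does not give $\Vert T_\gamma+T\Vert=2$.
\end{itemize}
The uncountability of $\alpha$ only makes room for countably many $y_\gamma^n$ on the domain side, through $\alpha'=\omega_0$. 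It does nothing for the countably many points $T_\gamma(y_\gamma^n)$ you must feed into the hypothesis on the range side, where the constraint is $|Z|<\beta$. So your ``bookkeeping'' paragraph misidentifies what the extra hypothesis does, and as written your proof of (2) only covers $\beta>\omega_0$.

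In fairness, the paper's proof of (2) is only the remark that it is ``completely similar'' to (1), and it does not handle this case either. The one-vector-per-$\gamma$ argument of (1) gives only $2-\varepsilon$ with an $\varepsilon$-dependent $T$, and the countable version meets the same obstruction. Closing the case $\beta=\omega_0$ needs an additional idea, or the extra assumption $\beta>\omega_0$.

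\textbf{A smaller point, in the non-rigid case.} Putting $z=0$ into the defining inequality gives only the lower bound $\Vert\phi(v)\Vert\ge(1-\varepsilon)\Vert v\Vert$. Your claimed upper bound on $\Vert\phi\Vert$, and hence on $\Vert T\Vert$, is unsupported. You need $\Vert\phi\Vert\le 1$ as part of the definition; the paper assumes this tacitly when it calls $\phi\circ P$ norm-one. With it, $\Vert T\Vert\in[1-\varepsilon,1]$ and your normalisation works. In the rigid case, by contrast, your $z=0$ observation correctly shows that $\phi$ is an isometry, which is more careful than the paper's unexplained ``norm-one''.
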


\begin{proof}
Let us prove (1). Let $Y$ be any Banach space. In order to show that $L(Y,X)$ is $\kappa$ octahedral select $\{T_\gamma: \gamma<\kappa'\}\subseteq S_{L(Y,X)}$, where $\kappa'<\kappa$, and pick $\varepsilon>0$. Let us find $T\in S_{L(Y,X)}$ satisfying that $\Vert T_\gamma+T\Vert>2(1-\varepsilon)^2$ holds for every $\gamma<\kappa'$.

Given $\gamma<\kappa'$ select an element $y_\gamma\in S_{Y}$ such that $\Vert T(y_\gamma)\Vert>1-\varepsilon$. Consider $V:=\overline{\spann\{y_\gamma: \gamma<\kappa'\}}\subseteq Y$. Observe that since $\dens(V)\leq \kappa'$ there exists $\alpha'<\alpha$ and a norm-one operator $j:V\longrightarrow \ell_\infty(\alpha')$ such that 
$$\Vert j(v)\Vert\geq (1-\varepsilon)\Vert v\Vert\ \forall v\in V.$$
Indeed, we have two possibilities:
\begin{enumerate}
    \item If $\alpha$ is uncountable, then there exists an infinite cardinal $\alpha'$ such that $\kappa'\leq \alpha'<\alpha$, and then the result follows since $\ell_\infty(\alpha')$ contains an isometric copy of every Banach space of density $\leq \alpha'$ (so in this particular case $j$ can be even chosen to be an into linear isometry).
    \item If $\alpha=\omega_0$ then $\kappa'$ is finite, so $V$ is finite dimensional. Since every Banach space is finitely representable in $c_0$ there exists a bounded operator $k:V\longrightarrow c_0$ with $\Vert k\Vert\leq 1$ and $\Vert k(v)\Vert\geq \left(1-\frac{\varepsilon}{2} \right)\Vert v\Vert$ holds for every $v\in V$. Then taking $\alpha'=n$ large enough then the operator $j:=P_n\circ k$, where $P_n:=c_0\longrightarrow \ell_\infty^n$ is the natural projection on the first $n$ coordinates, does the trick.
\end{enumerate}
Because of the isometric injectivity of $\ell_\infty(\alpha')$ we can find a norm-one operator $P:Y\longrightarrow \ell_\infty(\alpha')$ extending $j$. 

Define $Z:=\{T(y_\gamma): \gamma<\kappa'\}\subseteq X$, whose cardinality is strictly smaller than $\kappa\leq \beta$. By the assumption there exists an operator $\phi:\ell_\infty(\alpha')\longrightarrow X$ such that
$$\Vert z+\phi(y)\Vert\geq (1-\varepsilon)(\Vert z\Vert+\Vert y\Vert)$$
holds for every $z\in Z$ and every $y\in \ell_\infty(\alpha')$.

Define $T:=\phi\circ P:Y\longrightarrow X$, which is a norm-one operator. Let us prove that $\Vert T_\gamma+T\Vert>2-\varepsilon$ holds for every $\gamma<\kappa'$. Pick $\gamma<\kappa'$. Taking into account the assumptions on $j$ we have
\[
\begin{split}
\Vert T_\gamma+T\Vert\geq \Vert T_\gamma (y_\gamma)+\phi(P(y_\gamma))\Vert& \geq (1-\varepsilon)(\Vert T_\gamma(y_\gamma)\Vert+\Vert P(y_\gamma)\Vert)\\
& >(1-\varepsilon)\left(1-\varepsilon+\Vert j(y_\gamma)\Vert \right)\\
& =(1-\varepsilon)\left(2-2\varepsilon\right),
\end{split}
\]
as desired.

The proof of (2) is completely similar taking into account that the above mapping $j$ can be always be taken isometric in virtue of the uncountability assumption on $\alpha$.
\end{proof}

In the next sections we present several examples of Banach spaces enjoying $(\ell_\infty(\kappa),\beta)$-(rigid) octahedrality.

\section{$C(K)$ spaces}\label{sect:C(K)}

In this section we will obtain some results $C(K)$ spaces. In order to do so, let us start recalling the following definition from \cite[Definition 4.3]{almr26}: given a compact space $K$, the cardinal number known as \emph{reaping number} of $K$, denoted by $\mathfrak{r}(K)$, is the least cardinality of a family $\mathcal{F}$ of nonempty open subsets of $K$ such that for every two disjoint closed sets $L_1$ and $L_2$ there exists $W\in\mathcal{F}$  such that either $W\cap L_1 = \emptyset$ or $W\cap L_2=\emptyset$.

The above cardinal is introduced and systematically studied in \cite{almr26}, where it is proved to be a generalisation of the concept of the reaping number of a Boolean algebra (via the Stone duality mapping) and it turns out to characterise transfinite versions of the Daugavet property.

An inspection in the proof of \cite[Theorem 4.5]{almr26} reveals that (taking $g=0$ there) if $\mathfrak r(K)\geq \kappa$ for an uncountable cardinal $\kappa$ then, given any $\kappa'<\kappa$ and any family $\{f_\alpha: \alpha<\kappa'\}\subseteq S_{C(K)}$ there exists a sequence of pairwise disjoint closed sets with non-empty interior $L_n^+,L_n^-$ and functions $\psi_n^{\pm}$ such that $\psi_n^{\pm}=1$ on $L_n^{\pm}$ and $\psi_n^{\pm}=0$ on the rest of sets $L_m^{\pm}$ and on $L_n^{\mp}$, and such that
$$\Vert f_\alpha\pm (\psi_n^+-\psi_n^-)\Vert=2.$$
Consequently, taking $g_n:=\psi_n^+-\psi_n^-$, it follows that $(g_n)\subseteq \{h\in S_{C(K)}: \Vert f_\alpha\pm h\Vert=2\ \forall \alpha<\kappa\}$ and that $(g_n)$ is a sequence of functions whose supports are pairwise disjoint. Consequently, we have the following result.

\begin{proposition}\label{prop:C(K)normal}
Let $\kappa$ be an uncountable cardinal and $K$ be a compact Hausdorff space such that $\mathfrak r(K)\geq \kappa$ (in other words, $C(K)$ is $\kappa$-rigid octahedral). Then, for every $\{f_\alpha:\alpha<\kappa'\}\subseteq S_{C(K)}$ there exists a bounded operator $T:c_0\longrightarrow C(K)$ satisfying that
$$\Vert f_\alpha+T(g)\Vert=1+\Vert g\Vert$$
holds for every $\alpha<\kappa'$ and $g\in c_0$. In particular, $C(K)$ is $(\ell_\infty(\omega_0),\kappa)$-rigid octahedral.
\end{proposition}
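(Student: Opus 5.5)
Given $\kappa'<\kappa$ and $\{f_\alpha:\alpha<\kappa'\}\subseteq S_{C(K)}$, I will use the construction from the proof of \cite[Theorem 4.5]{almr26} recalled just before the statement. It provides pairwise disjoint closed sets $L_n^{\pm}$ with nonempty interior and functions $\psi_n^{\pm}$, and from these $g_n:=\psi_n^+-\psi_n^-$ with pairwise disjoint supports and $\Vert f_\alpha\pm g_n\Vert=2$. I will normalise so that $0\le\psi_n^{\pm}\le 1$, so that $\Vert g_n\Vert=1$, and so that $g_n=\pm 1$ exactly on $L_n^{\pm}$.

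First I define the operator $T:c_0\to C(K)$ by $T(a):=\sum_n a_n g_n$. Because the supports are disjoint, for every $t\in K$ at most one term is nonzero at $t$. Hence the series converges uniformly, as its tail is bounded by $\sup_{m\ge N}|a_m|\to 0$, and $\Vert T(a)\Vert=\sup_n|a_n|$. In particular $T$ is a linear isometry into $C(K)$.

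Next I fix $\alpha$ and $a\in c_0\setminus\{0\}$; the upper bound $\Vert f_\alpha+T(a)\Vert\le 1+\Vert a\Vert$ is trivial. Choose $n$ with $|a_n|=\Vert a\Vert$, say $a_n>0$; the case $a_n<0$ is symmetric using $L_n^-$. Then $T(a)=a_n$ on $L_n^+$, and $|T(a)|\le\Vert a\Vert$ everywhere. The key step is that the construction in \cite[Theorem 4.5]{almr26}, which gives $\Vert f_\alpha\pm g_n\Vert=2$, actually does so because $f_\alpha$ attains the value $1$ at some point of $L_n^+$ and the value $-1$ at some point of $L_n^-$. Granting this, at such a point $t\in L_n^+$ we get $f_\alpha(t)+T(a)(t)=1+\Vert a\Vert$, which gives equality.

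The main obstacle is justifying this sign information from the mere identity $\Vert f_\alpha\pm g_n\Vert=2$. From that identity one only learns that $|f_\alpha|=1$ somewhere where $|g_n|=1$, that is, at some point of $L_n^+\cup L_n^-$, with matching or opposite signs. Applying it with both signs shows that there are points $s,t\in L_n^+\cup L_n^-$ with $f_\alpha(s)g_n(s)=1$ and $f_\alpha(t)g_n(t)=-1$. This does not yet pin down which set contains them. To fix this, I will redefine the coordinate functions using this freedom: consider $h_n:=\psi_n^+$ alone, or re-inspect \cite{almr26}, where the sets $L_n^{\pm}$ are chosen inside the open sets $\{f_\alpha>1-\delta\}$ and $\{f_\alpha<-1+\delta\}$ for all $\alpha$ simultaneously, via the reaping number. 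In that construction $\sup_{L_n^+}f_\alpha=1$ and $\inf_{L_n^-}f_\alpha=-1$. Evaluating at points approaching these suprema, and using compactness of $L_n^{\pm}$, then gives the required points.

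Finally, since $\ell_\infty(\alpha')$ for finite $\alpha'$ is $\ell_\infty^{m}\subseteq c_0$ isometrically, restricting $T$ to the first $m$ coordinates gives the operators demanded in Definition~\ref{def:linfikappaocta}. For a set $Z$ with $|Z|<\kappa$, first normalise its nonzero elements and apply the above. Positive homogeneity then handles $\Vert z\Vert\ne 1$: writing $z=\lambda f$ with $\lambda>0$, one scales $a$ by $\lambda$ and uses the same point $t$. This yields $(\ell_\infty(\omega_0),\kappa)$-rigid octahedrality.
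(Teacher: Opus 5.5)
Your overall route is the paper's: take the disjointly supported $g_n\in S_{C(K)}$ with $\Vert f_\alpha\pm g_n\Vert=2$ and set $T(a)=\sum_n a_n g_n$. Your check that $T$ is a well-defined isometry, the restriction to $\ell_\infty^m$ and the homogeneity argument are all fine. The gap is the ``key step''. The claim that $f_\alpha$ attains $1$ on $L_n^+$ and $-1$ on $L_n^-$ is false in general. If $f_\alpha\equiv 1$, or merely $f_\alpha\geq 0$, then $f_\alpha$ never takes the value $-1$, and $\Vert f_\alpha-g_n\Vert=2$ is witnessed by a point where $f_\alpha=1$ and $g_n=-1$.

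Your proposed repair fails too. No construction can place $L_n^+$ inside $\{f_\alpha>1-\delta\}$ for all $\alpha$ at once: take $f_\beta=-f_\alpha$, and these open sets are disjoint. A reaping/splitting argument only makes $L_n^{\pm}$ \emph{meet} such open sets. For $f_\alpha\geq 0$ the set $\{f_\alpha<-1+\delta\}$ is empty, so $\inf_{L_n^-}f_\alpha=-1$ cannot hold. Using $h_n=\psi_n^+$ alone breaks down for the same reason: $\Vert 1-\psi_n^+\Vert\leq 1$ when $0\leq\psi_n^+\leq 1$, so negative coefficients are not handled.

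No repair is needed, though. The observation you dismissed already finishes the proof, because only the sign of the product $f_\alpha g_n$ at the norming point matters, not which of $L_n^{\pm}$ contains it. Let $|a_n|=\Vert a\Vert$.

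If $a_n>0$, take $s$ with $f_\alpha(s)g_n(s)=1$. Both values lie in $[-1,1]$, so $f_\alpha(s)=g_n(s)=\theta\in\{-1,1\}$. Since $g_n(s)\neq 0$, the point $s$ lies in the support of $g_n$. Hence $g_m(s)=0$ for $m\neq n$, so $T(a)(s)=a_ng_n(s)$ and $f_\alpha(s)+T(a)(s)=\theta(1+|a_n|)$.

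If $a_n<0$, take $t$ with $f_\alpha(t)g_n(t)=-1$. Then $g_n(t)=-f_\alpha(t)$, and $f_\alpha(t)+T(a)(t)=f_\alpha(t)(1+|a_n|)$.

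Either way $\Vert f_\alpha+T(a)\Vert\geq 1+\Vert a\Vert$. This is exactly the argument implicit in the paper's ``Consequently'' after recording that $(g_n)\subseteq\{h\in S_{C(K)}:\Vert f_\alpha\pm h\Vert=2\}$. Delete the false claim and substitute this sign argument, and your proof is complete.
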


Our next aim is to prove that in the case that $K$ is extremally disconnected then we can replace $c_0$ with $\ell_\infty$ in the above result. In such case $L_n^\pm$ can be taken to be clopen and thus $\psi_n^\pm=\chi_{L_n^\pm}$. Now the result follows from the following result.

Before going on in the next result, we need the following definition from \cite[Section 17 Q]{willard}: a compact space $K$ is said to be \textit{projective} if for every pair of compact spaces $L,M$, for every continuous function $f:K\longrightarrow L$ and every continuous and onto function $g:M\longrightarrow L$ there always exists a continuous function $h:K\longrightarrow M$ such that $f=g\circ h$. Recall that a compact space $K$ is projective if, and only if, $K$ is extremally disconnected \cite[Section 17 Q]{willard}.

Taking the above characterisation into account let us provide the following proposition, whose proof was provided to the author by Antonio Avil\'es.

\begin{proposition}\label{prop:linfextremadisco}
Let $K$ be an extremally disconnected compact space and let $\{C_i: i\in I\}$ be a family of clopen and pairwise disjoint sets. Then there exists an into linear isometry $\phi:\ell_\infty(I)\longrightarrow C(K)$ such that $\phi(e_i)=\chi_{C_i}$.
\end{proposition}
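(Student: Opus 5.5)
We will use the identification $\ell_\infty(I)=C(\beta I)$, where $\beta I$ is the Stone--\v{C}ech compactification of the discrete set $I$, together with the projectivity of $K$ recalled above. The isometry will be a composition operator $\phi(f):=\hat f\circ h$, where $h\colon K\longrightarrow \beta I$ is continuous and $\hat f$ is the continuous extension of $f\in\ell_\infty(I)$ to $\beta I$.

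The first step is to handle the part of $K$ not covered by the $C_i$. Let $U:=\bigcup_{i\in I}C_i$, which is open. Since $K$ is extremally disconnected, $\overline U$ is clopen, so $K=\overline U\cup (K\setminus\overline U)$ is a partition into clopen sets. If $K\setminus \overline U\neq\emptyset$ we put $\phi(f)=0$ on it. So the real task is to build a continuous map $h\colon \overline U\longrightarrow \beta I$ with $h(C_i)=\{i\}$ for every $i$. Note that $\overline U$ is again compact and extremally disconnected, being clopen in $K$.

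The second step builds $h$. Define $h_0\colon U\longrightarrow I$ by $h_0(t)=i$ for $t\in C_i$. This map is continuous because each $C_i$ is open. We must extend it to $\overline U$. The cleanest route is to use the fact that in an extremally disconnected space every dense open subset $U$ is $C^*$-embedded. Indeed, for a bounded continuous $g$ on $U$ and rationals $p<q$, the open sets $\{g<p\}$ and $\{g>q\}$ are disjoint, so their closures are disjoint. This gives the standard criterion for extending $g$ continuously to $\overline U$. Consequently $\overline U$ is the Stone--\v{C}ech compactification of $U$, and the universal property extends $h_0\colon U\to I\subseteq \beta I$ to a continuous $h\colon\overline U\to\beta I$.

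Alternatively, we can invoke projectivity. Let $L$ be $\overline U$ with each $C_i$ collapsed to a point; the quotient map $q$ is compact Hausdorff-valued because the $C_i$ are clopen. Let $g\colon \beta I\to$ (the appropriate quotient) be the natural map, and lift. The $C^*$-embedding route is more direct, and this is the step I expect to require the most care. Specifically, we must verify that disjoint open sets in $U$ have disjoint closures in $K$, which is exactly extremal disconnectedness.

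The third step is to verify the properties of $\phi$. Clearly $\phi$ is linear and $\Vert \phi(f)\Vert\le\Vert \hat f\Vert_\infty=\Vert f\Vert_\infty$. The image $h(\overline U)$ contains $I$, because each $C_i$ is nonempty; we may discard any empty $C_i$, or simply note that the norm identity only requires $h(\overline U)\supseteq\{i: C_i\neq\emptyset\}$. In the case that some $C_i$ are empty, we instead map $e_i$ to $0=\chi_{C_i}$. To keep $\phi$ isometric we then assume all $C_i$ are nonempty, which is implicit in the statement. With this, $\Vert\phi(f)\Vert\ge\sup_i|f(i)|=\Vert f\Vert$, so $\phi$ is an isometry. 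Finally, $\widehat{e_i}=\chi_{\{i\}}$ on $\beta I$, since $\{i\}$ is clopen there. Hence $\phi(e_i)=\chi_{h^{-1}(i)}$, and because $h^{-1}(\{i\})\cap \overline U$ equals $C_i$ by continuity and density, we get $\phi(e_i)=\chi_{C_i}$. For the density argument, observe that $h^{-1}(i)$ is clopen, and any point of it lying in $\overline U\setminus C_i$ would have a neighborhood meeting some $C_j$ with $j\neq i$, where $h=j$; this is impossible.
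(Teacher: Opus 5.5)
Your proof is correct, but you build the map $K\to\beta I$ differently from the paper.

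The paper does not single out $\overline U$. It defines $f\colon K\to I\cup\{\infty\}$, the one-point compactification of the discrete set $I$, by $f=i$ on $C_i$ and $f=\infty$ off $\bigcup_i C_i$, and checks continuity directly. It then uses that extremally disconnected compacta are projective (Gleason's theorem) to lift $f$ through the canonical surjection $q\colon\beta I\to I\cup\{\infty\}$. Since $q^{-1}(i)=\{i\}$, the equality $\hat f^{-1}(i)=C_i$ is immediate, and $g\mapsto g\circ\hat f$ is the isometry.

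You instead use only the elementary form of extremal disconnectedness: disjoint open sets have disjoint closures. From this you get that $U$ is $C^*$-embedded in the clopen set $\overline U$, so $\overline U=\beta U$. You then extend $h_0$ by the universal property and pad by $0$ on $K\setminus\overline U$. The cost is the separate treatment of $K\setminus\overline U$ and the small density argument showing $h^{-1}(i)=C_i$. The gain is that you avoid Gleason's theorem entirely. You also get a concrete description of $\phi(f)$: the continuous extension of $\sum_i f(i)\chi_{C_i}$ from $U$ to $\overline U$, extended by zero. You are right that the $C_i$ must be nonempty; the paper uses this tacitly when it asserts that $\hat f$ is onto.

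One caveat concerns your discarded aside. Collapsing each $C_i$ to a point of $\overline U$ need not produce a Hausdorff space. Take $K=\beta\mathbb N\times\{0,1\}$ and $C_n=\{n\}\times\{0,1\}$. For $p\in\beta\mathbb N\setminus\mathbb N$, the points $(p,0)$ and $(p,1)$ have no disjoint saturated neighbourhoods. So that version of the projectivity argument fails as stated. The paper's repair is to collapse all of $K\setminus\bigcup_i C_i$ to the single point $\infty$. Since your actual argument goes through $C^*$-embedding, this does not affect its correctness.
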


\begin{proof}
Since $K$ is extremally disconnected then $K$ is projective.

We denote by $I\cup\{\infty\}$ the one point compactification of the set $I$ endowed with the discrete topology. Now the universal extension property of $\beta I$, the Stone-Cech compactification of $I$ endowed with the discrete topology, implies the existence of a continuous function $q:\beta I\longrightarrow I\cup\{\infty\}$ such that $q(i)=i$ holds for every $i\in I$. Observe that $e\notin I$ implies $q(e)=\infty$ (in particular $q$ is onto). In order to prove it, select a net $\{i_\alpha\}\rightarrow e$ such that $i_\alpha\in I$ holds for ever $\alpha$ (this is from the density of $I$ in $\beta I$). Now, given $i\in I$ we can find $\alpha_0$ such that $i_\alpha\neq i$ for $\alpha>\alpha_0$. This implies that $q(i_\alpha)=i_\alpha\neq i$ holds for every $\alpha\geq \alpha_0$. Since the topology on $I$ is the discrete topology we must have $q(e)=\lim q(i_\alpha)\neq i$. The arbitrariness of $i\in I$ forces $q(e)=\infty$, as desired.

Define the mapping $f:K\longrightarrow I\cup\{\infty\}$ by the equation
$$f(x):=\left\{\begin{array}{cc}
i     & \mbox{ if }x\in C_i \\
\infty     & \mbox{ otherwise.}
\end{array} \right.$$
It is immediate that $f$ is continuous at every point of $\bigcup\limits_{i\in I} C_i$. On the other hand, let $x\in K\setminus \bigcup\limits_{i\in I} C_i$ and let $U$ be an open set in $I\cup\{\infty\}$ containing $\infty$. We can assume with no loss of generality that $U=I\setminus C$, where $C\subseteq I$ is compact. Since $I$ is discrete we obtain that $C$ is finite, say $C=\{i_1,\ldots, i_p\}$. Now, since $x\notin \bigcup\limits_{i\in I} C_i$ we can get an open set $O$ containing $x$ in $K$ such that $O\cap C_{i_j}=\empty$ for $1\leq j\leq p$. It is immediate that $O\subseteq f^{-1}(I\setminus C)=f^{-1}(U)$. 

Now $f$ and $q$ are continuous and $q$ is onto. From the projectivity of the compact space $K$ we can find a continuous mapping $\hat f:K\longrightarrow \beta I$ such that $q\circ \hat f=f$. Observe that $\hat f$ is onto since $f$ is onto too and $f(x)=i$ holds for every $x\in C_i$, $i\in I$. Now, since $\hat f$ is onto we get that the composition operator $C_{\hat f}:C(\beta I)=\ell_\infty(I)\longrightarrow C(K)$ defined by
$$C_{\hat f}(g)=g\circ \hat f$$
is an into linear isometry. Finally, observe that given $i\in I$ we have $C_{\hat f}(e_i)=\chi_{C_i}$. Indeed, given $x\in C_i$ we have
$$C_{\hat f}(e_i)(x)=(e_i\circ \hat f)(x)=e_i(i)=1,$$
whereas if $x\notin C_i$ then $f(x)\neq i\in \beta I$ and thus
$$C_{\hat f}(e_i)(x)=e_i(f(x))=0,$$
and we are done.
\end{proof}

A direct application of Proposition~\ref{prop:C(K)normal} together with Proposition~\ref{prop:linfextremadisco} yields the following result.

\begin{theorem}\label{theo:C(K)extremadisconnected}
Let $K$ be compact Hausdorff topological space such that $\mathfrak r(K)\geq \kappa$ (in other words, $C(K)$ is $\kappa$-rigid octahedral). If $K$ is extremally disconnected then $C(K)$ is $(\ell_\infty(\omega_1),\kappa)$-rigid octahedral.
\end{theorem}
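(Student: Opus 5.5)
We need to show: for every $Z\subseteq S_{C(K)}$ with $|Z|<\kappa$ (reducing to norm-one elements by homogeneity, the zero vector being trivial) and every $\alpha'<\omega_1$, i.e.\ $\alpha'\le\omega_0$, there is an operator $\phi:\ell_\infty(\alpha')\to C(K)$ with $\Vert f+\phi(y)\Vert=\Vert f\Vert+\Vert y\Vert$ for all $f\in Z$, $y\in\ell_\infty(\alpha')$. Since $\ell_\infty^n$ is isometric to a $1$-complemented subspace of $\ell_\infty$ via the embedding $y\mapsto (y,0,0,\ldots)$, it suffices to handle $\alpha'=\omega_0$, i.e.\ to produce $\phi:\ell_\infty\to C(K)$; restricting $\phi$ to the copy of $\ell_\infty^n$ preserves the identity.

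First step: write $Z=\{f_\gamma:\gamma<\kappa'\}$ with $\kappa'<\kappa$ and apply the construction behind Proposition~\ref{prop:C(K)normal} (i.e.\ the inspection of \cite[Theorem 4.5]{almr26}). This gives pairwise disjoint closed sets with nonempty interior $L_n^\pm$ and the norming property $\Vert f_\gamma\pm(\psi_n^+-\psi_n^-)\Vert=2$. Because $K$ is extremally disconnected, the closure of an open set is clopen, so I would replace $L_n^\pm$ by clopen sets (e.g.\ taking closures of their interiors, keeping disjointness by shrinking to clopen subsets where $|f_\gamma|$ is nearly maximal as in the original construction) and take $\psi_n^\pm=\chi_{L_n^\pm}$. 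What one actually needs from the construction is: for each $\gamma$ and each $n$, $f_\gamma$ attains (or approaches arbitrarily) values $+1$ or $-1$ on $L_n^+$ and on $L_n^-$ in a way that lets signs align; more precisely, for every $\gamma$ and $n$ there are points $t^\pm\in L_n^{\pm}$ with $f_\gamma(t^+)$ close to $1$ and $f_\gamma(t^-)$ close to $-1$ (suprema equal to $\pm1$ on each set, up to sign choices).

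Second step: let $C_{2n}=L_n^+$, $C_{2n+1}=L_n^-$, and apply Proposition~\ref{prop:linfextremadisco} to obtain an into isometry $\Phi:\ell_\infty(\mathbb N)\to C(K)$ with $\Phi(e_i)=\chi_{C_i}$. Compose with the isometry $J:\ell_\infty\to\ell_\infty(\mathbb N)$, $J(y)=(y_1,-y_1,y_2,-y_2,\ldots)$, and set $\phi=\Phi\circ J$. Then $\phi(y)$ equals $y_n$ on $L_n^+$ and $-y_n$ on $L_n^-$ (by continuity of $\Phi$ and the fact that finitely supported vectors are determined on these clopen sets; to be safe I would verify via the explicit form $\Phi(g)=g\circ\hat f$ that $\phi(y)|_{C_i}$ is constant equal to the $i$-th coordinate), and $\Vert\phi(y)\Vert=\Vert y\Vert$.

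Third step: verify the equality. Given $y\ne0$ and $f_\gamma$, for any $\eta>0$ choose $n$ with $|y_n|>\Vert y\Vert-\eta$; if $y_n>0$ pick $t\in L_n^+$ with $f_\gamma(t)>1-\eta$, getting $(f_\gamma+\phi(y))(t)>2\cdot$-type bound $1+\Vert y\Vert-2\eta$; if $y_n<0$ use $t\in L_n^-$ with $f_\gamma(t)<-1+\eta$. Letting $\eta\to0$ gives $\Vert f_\gamma+\phi(y)\Vert\ge 1+\Vert y\Vert$, and the reverse is the triangle inequality. The main obstacle is the first step: extracting from \cite[Theorem 4.5]{almr26} that the sets $L_n^\pm$ can be chosen clopen and that $\sup_{L_n^+}f_\gamma=1$, $\inf_{L_n^-}f_\gamma=-1$ for every $\gamma$ simultaneously (not merely the norm-$2$ statement for the difference $\psi_n^+-\psi_n^-$); I would first try to read this off directly from the reaping-number construction, which builds $L_n^\pm$ inside the sets $\{f_\gamma>1-\delta\}$-type regions meeting each prescribed closed set.
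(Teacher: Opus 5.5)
Your architecture matches the paper's: clopen $L_n^\pm$, Proposition~\ref{prop:linfextremadisco} applied to the family $\{L_n^+,L_n^-\}$, and the twist $y\mapsto(y_1,-y_1,y_2,-y_2,\ldots)$. This gives $\phi(y)=y_nh_n$ on $L_n^+\cup L_n^-$, where $h_n:=\chi_{L_n^+}-\chi_{L_n^-}$. The gap is in Step~3, and the property you single out as the main obstacle is the wrong target.

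That property cannot come from any construction. If $f_\gamma\equiv 1$ (or any norm-one $f_\gamma\geq 0$), no point has $f_\gamma$ close to $-1$, so $\inf_{L_n^-}f_\gamma=-1$ is impossible. Even where it holds, it only gives $f_\gamma\approx h_n$ at some points, which handles $y_n>0$ only. Your case $y_n<0$ is miscomputed. For $t\in L_n^-$ with $f_\gamma(t)<-1+\eta$ you have $\phi(y)(t)=-y_n=|y_n|$, so $(f_\gamma+\phi(y))(t)\in[-1+|y_n|,-1+|y_n|+\eta)$. Its modulus is roughly $1-|y_n|$, not $1+\Vert y\Vert-2\eta$. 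For $y_n<0$ you need $f_\gamma\approx -h_n$ somewhere: a point of $L_n^+$ where $f_\gamma\approx-1$, or a point of $L_n^-$ where $f_\gamma\approx 1$.

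The ingredient you dismissed is exactly what is needed, and it is how the paper concludes. This is the two-sided identity $\Vert f_\gamma\pm h_n\Vert=2$ from the construction preceding Proposition~\ref{prop:C(K)normal}. Given $y$ and $n$, let $s\in\{-1,1\}$ with $y_n=s|y_n|$. By compactness there is $t\in K$ with $|f_\gamma(t)+sh_n(t)|=2$. Since $|f_\gamma|\leq1$ and $|h_n|\leq1$, this forces $|h_n(t)|=1$ (so $t\in L_n^+\cup L_n^-$) and $f_\gamma(t)=sh_n(t)$. Hence
\[
|f_\gamma(t)+\phi(y)(t)|=|h_n(t)|\,|s+y_n|=1+|y_n|.
\]
Taking the supremum over $n$ gives $\Vert f_\gamma+\phi(y)\Vert\geq1+\Vert y\Vert$ directly, with no $\eta$.

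On the clopen step, which the paper only asserts, your suggested shrinking does not work. Shrinking $L_n^\pm$ to $\overline{\operatorname{int}L_n^\pm}$ keeps disjointness but may discard the points where $|f_\gamma|=1$. A shrinking adapted to one $f_\gamma$ need not keep the norming points of the others. It is safer to build clopen sets inside the reaping argument itself. This uses two facts: in an extremally disconnected space two disjoint closed sets are separated by a clopen set, and a clopen $D\subseteq K$ satisfies $\mathfrak r(D)\geq\mathfrak r(K)$.
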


\begin{remark}
Observe that in \cite[Remark 5.10]{almr26} it is proved that, for any cardinal $\kappa$, there exists a totally disconnected compact space $K$ such that $C(K)$ is $\kappa$-rigid octahedral. As a consequence of Theorem~\ref{theo:C(K)extremadisconnected}, for every cardinal $\kappa$ there are $C(K)$ spaces which are $(\ell_\infty(\omega_1),\kappa)$-rigid octahedral.
\end{remark}

\section{Ultrapowers}\label{section:ultrapowers}

In this section we pursue to analyse how $(\ell_\infty(\alpha),\beta)$ octahedrality is preserved by taking ultrapower spaces

\begin{theorem}\label{theo:ultralinfi}
Let $X$ be a Banach space and let $\kappa$ be an infinite cardinal. Assume that $X$ is $(\ell_\infty(\omega_0),\kappa)$ octahedral. Let $\mathcal U$ be any countably incomplete ultrafilter over any infinite set $I$. Then $X_\mathcal U$ is $(\ell_\infty(\omega_1),\kappa)$-rigid octahedral.
\end{theorem}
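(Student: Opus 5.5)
The plan is to use countable incompleteness to build one operator on $\ell_\infty$ as an ultraproduct of the finite-dimensional operators that the hypothesis supplies coordinatewise, with accuracies improving along $\mathcal U$. Every $\ell_\infty(\alpha')$ with $\alpha'<\omega_1$ is $\ell_\infty^n$ or $\ell_\infty$. Moreover $\ell_\infty^n$ sits isometrically inside $\ell_\infty$ (extend by zero), and restricting a suitable operator preserves the desired identity. So it suffices to show the following: for every $Z\subseteq X_\mathcal U$ with $\vert Z\vert<\kappa$ there is $\phi:\ell_\infty\longrightarrow X_\mathcal U$ with $\Vert\phi\Vert\leq 1$ and
$$\Vert z+\phi(y)\Vert=\Vert z\Vert+\Vert y\Vert \quad \forall z\in Z,\ y\in\ell_\infty.$$

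\textbf{Setup.} Since $\mathcal U$ is countably incomplete, fix a decreasing sequence $I=I_0\supseteq I_1\supseteq I_2\supseteq\cdots$ with $I_m\in\mathcal U$ and $\bigcap_m I_m=\emptyset$. For $i\in I$ put $n(i):=\max\{m: i\in I_m\}$, which is finite. Then for every $m$ the set $\{i: n(i)\geq m\}\supseteq I_m$ belongs to $\mathcal U$, so $\lim_\mathcal U n(i)=\infty$. For each $z\in Z$ fix once and for all a representative $z=[z_i]_\mathcal U$ with $\Vert z_i\Vert\leq \Vert z\Vert$.

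\textbf{Coordinatewise choice.} For $i\in I_1$ let $G_i:=\{z_i: z\in Z\}\subseteq X$, which has cardinality $<\kappa$. Applying the $(\ell_\infty(\omega_0),\kappa)$-octahedrality of $X$ with the finite cardinal $n(i)$ and $\varepsilon=1/n(i)$, we get $\phi_i:\ell_\infty^{n(i)}\longrightarrow X$ with
$$\Vert z_i+\phi_i(v)\Vert\geq \left(1-\tfrac{1}{n(i)}\right)(\Vert z_i\Vert+\Vert v\Vert)$$
for all $z\in Z$ and $v\in\ell_\infty^{n(i)}$. As in the proof of Theorem~\ref{theo:linftykappaoh}, we take the operators given by the definition to be of norm at most one.

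I expect this normalisation to be the delicate point. The inequality alone does not bound $\Vert\phi_i\Vert$ from above, and a uniform bound is needed for the ultraproduct below to be defined. If the definition is not read with $\Vert\phi\Vert\leq1$, I would try first to obtain it by rescaling together with the lower estimate.

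For $i\notin I_1$ let $\phi_i:=0$; these indices are irrelevant because $I_1\in\mathcal U$.

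\textbf{Ultraproduct operator.} Let $P_n:\ell_\infty\longrightarrow\ell_\infty^n$ be the restriction to the first $n$ coordinates, and define
$$\phi(y):=[\phi_i(P_{n(i)}y)]_\mathcal U .$$
This is linear with $\Vert\phi\Vert\leq 1$. Given $y\in\ell_\infty$, we have $\Vert P_n y\Vert\to\Vert y\Vert$ monotonically as $n\to\infty$, and $n(i)\to\infty$ along $\mathcal U$, so $\lim_\mathcal U\Vert P_{n(i)}y\Vert=\Vert y\Vert$. Therefore, for $z\in Z$,
$$\Vert z+\phi(y)\Vert=\lim_\mathcal U\Vert z_i+\phi_i(P_{n(i)}y)\Vert\geq\lim_\mathcal U\left(1-\tfrac{1}{n(i)}\right)\left(\Vert z_i\Vert+\Vert P_{n(i)}y\Vert\right)=\Vert z\Vert+\Vert y\Vert.$$
The reverse inequality is the triangle inequality together with $\Vert\phi\Vert\leq1$. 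Hence equality holds, with $\varepsilon=0$ exactly, which is where the rigidity comes from.
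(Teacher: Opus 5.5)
Your proposal is correct and is essentially the paper's own proof: partition $I$ along a decreasing sequence in $\mathcal U$ with empty intersection, pick $\phi_i:\ell_\infty^{n(i)}\to X$ with accuracy $1/n(i)$, and glue them into $[\phi_i(P_{n(i)}y)]_{\mathcal U}$. Your bookkeeping with $n(i)$ and $\lim_{\mathcal U}$ is slightly cleaner than the paper's $\varepsilon$-sets.

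The normalisation $\Vert\phi_i\Vert\leq 1$ that you flag is part of the intended definition: the paper uses it silently to get $\Vert\Phi\Vert\leq 1$ and states it explicitly in Proposition~\ref{prop:heretransaii}. Your rescaling fallback would not work, because without the normalisation the theorem fails. Unnormalised, $X=\ell_2$ satisfies the hypothesis for $\kappa=\omega_0$ via $\phi(v)=M\sum_k v_ke_k$ with orthonormal $e_k\perp Z$ and $M$ large. On the other hand, the parallelogram law shows that the Hilbert space $(\ell_2)_{\mathcal U}$ fails even the unnormalised rigid conclusion.
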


\begin{proof} In order to prove the result let $k'<\kappa$ and let a subset $Z=\{[z_i^\alpha]: \alpha<\kappa'\}\subseteq X_\mathcal U$.
Since $\mathcal U$ is countably incomplete we can consider a decreasing sequence $\{A_n: n\in\mathbb N\}\subseteq \mathcal U$ such that $\bigcap\limits_{n=1}^\infty A_n=\emptyset$. We can assume with no loss of generality that $A_1=I$. Let $n\in\mathbb N$. Given $i\in A_n\setminus A_{n+1}$ we can find by the assumption a bounded operator $\phi_i:\ell_\infty^n\longrightarrow X$ satisfying that
$$\Vert z_i^\alpha+\phi_i((\lambda_1,\ldots, \lambda_n))\Vert\geq \left(1-\frac{1}{n}\right)(\Vert z_i^\alpha\Vert+\Vert (\lambda_1,\ldots,\lambda_n)\Vert_\infty)$$
holds for every $\alpha<\kappa'$ and every $(\lambda_1,\ldots, \lambda_n)\in \ell_\infty^n$. 

Now, given $i\in A_n\setminus A_{n+1}$ define $P_i:\ell_\infty\longrightarrow \ell_\infty^n$ the natural projection
$$P_i((\lambda_k)):=(\lambda_1,\ldots,\lambda_n).$$
Finally define $\Phi:\ell_\infty\longrightarrow X_\mathcal U$ by the equation
$$\Phi(\lambda)(i):=\phi_i(P_i(\lambda)).$$
$\Phi$ is well defined, linear and bounded with $\Vert \Phi\Vert\leq 1$. Let us prove that $\Phi$ satisfies the requirements.

In order to do so select any $\alpha<\kappa'$ and $\lambda=(\lambda_n)\in \ell_\infty$, and let us prove that
$$\Vert [z_i^\alpha]+\Phi(\lambda)\Vert=\Vert [z_i^\alpha]\Vert+\Vert\lambda\Vert.$$
In order to do so we can assume with no loss of generality that $\lambda\neq 0$ (for $\lambda=0$ the requested equality trivially follows). 

Pick $\varepsilon>0$ and let $n\in\mathbb N$ such that $\vert \lambda_n\vert>\Vert \lambda\Vert_\infty-\varepsilon$. Now set
$$B_1:=\{i\in I: \vert \Vert[z_i^\alpha]\Vert-\Vert z_i^\alpha\Vert\vert<\varepsilon\}\in\mathcal U.$$
Take also
$$B_2:=\{i\in I: \left\vert \Vert [z^\alpha
_i]+\Phi(\lambda)\Vert-\Vert z_i^\alpha+\Phi(\lambda)(i)\Vert\right\vert<\varepsilon\}\in\mathcal U.$$
Now define $C:=B_1\cap B_2\cap A_k\in\mathcal U$, where $k$ is big enough to guarantee $\frac{1}{k}<\varepsilon$ and $k\geq n$. Given $j\in C$ we have
\[\begin{split}
\Vert [z_i^\alpha]+\Phi(\lambda)\Vert& \mathop{>}\limits^{j\in B_2}\Vert z_j^\alpha+\Phi(\lambda)(j)\Vert -\varepsilon\mathop{=}\limits^{j\in A_k}\Vert z_j^\alpha+\phi_k(P_k(\lambda))\Vert-\varepsilon \\
& \geq \left(1-\frac{1}{k}\right)(\Vert z_j^\alpha\Vert+\Vert P_k(\lambda)\Vert)-\varepsilon\\
& \mathop{>}\limits^{\frac{1}{k}<\varepsilon} \left(1-\varepsilon\right)(\Vert z_j^\alpha\Vert+\Vert P_k(\lambda)\Vert)-\varepsilon\\
& \mathop{>}\limits^{j\in B_1}(1-\varepsilon)(\Vert [z_i^\alpha]\Vert-\varepsilon+\Vert (\lambda_1\ldots, \lambda_k)\Vert_\infty)-\varepsilon\\
& \geq (1-\varepsilon)(\Vert [z_i^\alpha]\Vert-\varepsilon+\vert\lambda_n\vert)-\varepsilon\\
& >(1-\varepsilon)(\Vert [z_i^\alpha]\Vert-\varepsilon+\Vert \lambda\Vert_\infty-\varepsilon)-\varepsilon.
\end{split}\]
The arbitrariness of $\varepsilon$ concludes the proof.
\end{proof}

In particular the above proves that if $K(Y,X)$ is $\kappa$ octahedral for every finite-dimensional Banach space $Y$ then $L(Y,X_\mathcal U)$ is $\kappa$-rigid octahedral for every separable Banach space $Y$. In the sequel we will show that we can even get $(\ell_\infty(\alpha),\kappa)$-octahedrality for any $\alpha$ if we select a  big enough ultrapower space.

\begin{theorem}\label{theo:ultrapotlargos}
Let $\kappa$ be an infinite cardinal and assume that $X$ is a $(\ell_\infty(\omega_0),\kappa)$ octahedral Banach space. Then, given any uncountable cardinal $\alpha$, there exists a countably incomplete ultrafilter $\mathcal U$ over some infinite set $J$ such that $X_\mathcal U$ is $(\ell_\infty(\alpha),\kappa)$-rigid octahedral.
\end{theorem}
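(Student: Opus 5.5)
We follow the construction in the proof of Theorem~\ref{theo:ultralinfi}, replacing the coordinate projections $P_i:\ell_\infty\to\ell_\infty^n$ by projections onto finitely many coordinates of $\ell_\infty(\alpha')$ indexed by finite subsets of $\alpha'$. Take as index set
$$J:=\{(F,n): F\subseteq \alpha \mbox{ finite},\ n\in\mathbb N\},$$
directed by $(F,n)\leq (G,m)$ iff $F\subseteq G$ and $n\leq m$. Let $\mathcal U$ be an ultrafilter on $J$ containing every cone $\{(G,m): (G,m)\geq (F,n)\}$; this family has the finite intersection property because $J$ is directed. The sets $A_n:=\{(G,m): m\geq n\}$ lie in $\mathcal U$, decrease, and have empty intersection, so $\mathcal U$ is countably incomplete.

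Fix $\alpha'<\alpha$, $\kappa'<\kappa$ and $Z=\{[z_j^\gamma]:\gamma<\kappa'\}\subseteq X_\mathcal U$, with representatives in $X$. For $j=(F,n)$, $(\ell_\infty(\omega_0),\kappa)$-octahedrality applied to $\{z_j^\gamma:\gamma<\kappa'\}$, which has cardinality $<\kappa$, gives $\phi_j:\ell_\infty(F\cap\alpha')\cong\ell_\infty^{|F\cap \alpha'|}\to X$ with
$$\Vert z_j^\gamma+\phi_j(v)\Vert\geq \left(1-\tfrac1n\right)(\Vert z_j^\gamma\Vert+\Vert v\Vert)$$
for all $\gamma$ and $v$. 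We may assume $\Vert\phi_j\Vert\le 1$: taking $z$ with $\Vert z\Vert$ large forces $\Vert \phi_j\Vert\leq 1+o(1)$, or we simply rescale, as the previous proof does implicitly. Let $P_j:\ell_\infty(\alpha')\to\ell_\infty(F\cap\alpha')$ be restriction, and define $\Phi(\lambda):=[\phi_j(P_j\lambda)]_{\mathcal U}$. This $\Phi$ is linear and bounded by $1$, so $\Vert [z^\gamma]+\Phi(\lambda)\Vert\le \Vert[z^\gamma]\Vert+\Vert\lambda\Vert$.

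For the reverse inequality, given $\lambda\neq0$ and $\varepsilon>0$, pick a coordinate $\xi<\alpha'$ with $|\lambda_\xi|>\Vert\lambda\Vert-\varepsilon$. Intersect the cone above $(\{\xi\},k)$, where $1/k<\varepsilon$, with the two $\mathcal U$-sets $B_1,B_2$ on which the norms are $\varepsilon$-close to their $\mathcal U$-limits, exactly as in Theorem~\ref{theo:ultralinfi}. For $j=(F,m)$ in this set we have $\xi\in F$ and $m\geq k$, so $\Vert P_j\lambda\Vert\geq|\lambda_\xi|$, and the same chain of inequalities yields
$$\Vert[z^\gamma]+\Phi(\lambda)\Vert\geq(1-\varepsilon)(\Vert[z^\gamma]\Vert+\Vert\lambda\Vert-2\varepsilon)-\varepsilon.$$
Letting $\varepsilon\to0$ gives equality, so $X_\mathcal U$ is $(\ell_\infty(\alpha),\kappa)$-rigid octahedral.

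The crux of the argument is the choice of the ultrafilter: it must simultaneously eventually contain every single coordinate $\xi<\alpha$, which is why the index set is built from finite subsets of $\alpha$, and send $n\to\infty$, which gives countable incompleteness. Once $J$ is set up this way, the estimate is identical to that of Theorem~\ref{theo:ultralinfi}. One minor point is that $\Phi$ depends on $\alpha'$ only through restriction to $F\cap\alpha'$, so a single ultrafilter serves every $\alpha'<\alpha$. Since the coordinates of $\ell_\infty(\alpha')$ are indexed by $\alpha'\subseteq\alpha$, one could even work with finite subsets of $\alpha$ itself and treat $\alpha'=\alpha$ as well.
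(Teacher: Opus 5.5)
Your argument is essentially the paper's. Both use an ultrafilter on finite subsets of a set of size $\alpha$ that contains all the cones. Both build local operators $\phi_j$ on the finite-dimensional spaces $\ell_\infty(F)$, glue them through restriction maps, and run the same estimate on $B_1\cap B_2$ intersected with a cone. The only difference is bookkeeping. You carry the precision $n$ as a second coordinate of the index, which makes countable incompleteness and the choice $\varepsilon=1/n$ immediate. The paper instead extracts the partition $\{U_n\setminus U_{n+1}\}$ of $J$ from countable incompleteness and works directly with $\ell_\infty(I)$, $\vert I\vert=\alpha$.

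One justification in your write-up is wrong: $\Vert\phi_j\Vert\le 1$ cannot be derived from the inequality. The inequality only bounds $\Vert\phi_j(v)\Vert$ from below, so taking $\Vert z\Vert$ large gives no upper bound. Rescaling fails too, since replacing $\phi_j$ by $\phi_j/\Vert\phi_j\Vert$ replaces $\Vert v\Vert$ by $\Vert v\Vert/\Vert\phi_j\Vert$ on the right-hand side.

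In fact, without a norm bound the hypothesis holds for $\ell_2(\Gamma)$ with $\Gamma$ large. Take $v\mapsto M\sum_i v_iu_i$, with $(u_i)$ orthonormal and orthogonal to $Z$ and $M$ large; the inequality follows from Cauchy--Schwarz. Yet ultrapowers of $\ell_2(\Gamma)$ are Hilbert spaces and fail the conclusion. Indeed, for $z\neq 0$ the map $t\mapsto\Vert z+t\phi(e_1)\Vert^2$ is a quadratic polynomial, so it cannot equal $(\Vert z\Vert+\vert t\vert)^2$ for all $t$.

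So the bound $\Vert\phi\Vert\le 1$ must be read as part of Definition~\ref{def:linfikappaocta}. This is how the paper uses it: explicitly in the proofs of Theorem~\ref{theo:linftykappaoh} and Proposition~\ref{prop:heretransaii}, and implicitly in Theorem~\ref{theo:ultralinfi}. With that reading, your proof is complete.
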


\begin{proof}
Let $I$ be any set with $\vert I\vert=\alpha$ and let $J$ be the set of finite subsets of $I$. Given $A\in J$ call
$$F_A:=\{B\in J: A\subseteq B\}\subseteq J.$$
Observe that $\mathcal B:=\{F_A: A\in J\}\subseteq \mathcal P(J)$ defines a filter basis on $J$. Select $\mathcal U$ to be a free ultrafilter containing the filter basis $\mathcal B$. It is immediate that $\mathcal U$ is countably incomplete. Indeed, if we select a sequence $\{A_n: n\in\mathbb N\}\subseteq J$ such that $\bigcup\limits_{n=1}^\infty A_n$ is infinite, then $\bigcap\limits_{n=1}^\infty F_{A_n}=\emptyset$. Taking it into account define a decreasing sequence $\{U_n:n\in\mathbb N\}\subseteq \mathcal U$ with $U_1=J$ and $\bigcap\limits_{n\in\mathbb N} U_n=\emptyset$. This implies that $J=\bigcup\limits_{n\in\mathbb N} U_n\setminus U_{n+1}$ defines a partition on $J$.

In order to prove that $X_\mathcal U$ satisfies our requirement select $\kappa'<\kappa$ and a collection $\{[z_i^\alpha]: \alpha<\kappa'\}\subseteq X_\mathcal U$.

Given $A\in J$ define the natural projection $P_A:\ell_\infty(I)\longrightarrow \ell_\infty(A)$. 

Now, given $A\in U_n\setminus U_{n+1}$ we have, by the assumption, a bounded operator $\phi_A:\ell_\infty(A)\longrightarrow X$ satisfying that
\begin{equation}\label{theo:ultralargaortocondi}\Vert z_A^\alpha+\phi_A(g)\Vert\geq \left(1-\frac{1}{n}\right)(\Vert z_A^\alpha\Vert+\Vert g\Vert)
\end{equation}
holds for every $\alpha<\kappa'$ and every $g\in \ell_\infty(A)$. Now define $\Phi:\ell_\infty(I)\longrightarrow X_\mathcal U$ by the equation
$$\Phi(f)(A):=\phi_A(P_A(f)).$$
Similar arguments to the proof of Theorem~\ref{theo:ultralinfi} allows to conclude the result. Indeed, select any $\alpha<\kappa'$ and $g\in \ell_\infty(I)\setminus\{0\}$, and let us prove that
$$\Vert [z_A^\alpha]+\Phi(g)\Vert=\Vert [z_A^\alpha]\Vert+\Vert g\Vert.$$
In order to do so select $\varepsilon>0$ and define the sets
$$B_1:=\{F\in J: \vert \Vert [z_A^\alpha]\Vert-\Vert z_F^\alpha\Vert \vert<\varepsilon\}\in\mathcal U.$$
$$B_2:=\{F\in J: \vert \Vert [z_A^\alpha]+\Phi(g)\Vert - \Vert z_F^\alpha+\phi_F(P_F(g)) \Vert \vert<\varepsilon\}\in\mathcal U.$$
Observe also that there exists some finite set $G\subseteq I$ such that $\Vert P_G(g)\Vert>\Vert g\Vert-\varepsilon$, and clearly $\Vert P_F(g)\Vert>\Vert g\Vert-\varepsilon$ if $G\subseteq F$. Now, given $n\in\mathbb N$, select any $F\in B_1\cap B_2\cap F_G\cap A_n$. Then we have
\[\begin{split}
    \Vert [z_A^\alpha]+\Phi(g)\Vert & \mathop{\geq}\limits^{F\in B_2} \Vert z_F^\alpha+\phi_F(P_F(g)) \Vert-\varepsilon\\
    & \mathop{\geq}\limits^{\mbox{\tiny{\eqref{theo:ultralargaortocondi}}}} \left(1-\frac{1}{n}\right)(\Vert z_F^\alpha\Vert+\Vert P_F(g)\Vert)-\varepsilon\\
    & \mathop{\geq}\limits^{F\in B_1} \left(1-\frac{1}{n}\right)(\Vert [z_A^\alpha]\Vert-\varepsilon+\Vert P_F(g)\Vert)-\varepsilon\\
    & \mathop{\geq}^{G\subseteq F} \left(1-\frac{1}{n}\right)(\Vert [z_A^\alpha]\Vert+\Vert g\Vert-2\varepsilon)-\varepsilon.
\end{split}\]
The arbitrariness of $n$ and $\varepsilon$ finishes the proof.
\end{proof}

\section{Spaces of universal disposition}\label{section:espadispouni}

In this section we pursue to show that spaces of universal disposition enjoy $(\ell_\infty(\alpha),\beta)$ octahedrality properties. In order to do so, let us begin with the following result, which shows that $(\ell_\infty(\alpha),\beta)$ octahedrality is inherited by transfinite almost isometric ideals.

\begin{proposition}\label{prop:heretransaii}
Let $X$ be a Banach space which is $(\ell_\infty(\alpha),\beta)$-octahedral. Let $\kappa$ be an infinite cardinal with $\max\{2^{\alpha'},\beta'\}<\kappa$ for every $\alpha'<\alpha$ and every $\beta'<\beta$. If $Y\subseteq X$ is a $\kappa$-ai ideal in $X$ then $Y$ is $(\ell_\infty(\alpha),\beta)$-octahedral.
\end{proposition}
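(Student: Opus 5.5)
Take a subset $Z\subseteq Y$ with $\vert Z\vert<\beta$, a cardinal $\alpha'<\alpha$ and $\varepsilon>0$. Since $Y\subseteq X$ and $X$ is $(\ell_\infty(\alpha),\beta)$-octahedral, there is an operator $\phi:\ell_\infty(\alpha')\longrightarrow X$ with
$$\Vert z+\phi(y)\Vert\geq (1-\varepsilon)(\Vert z\Vert+\Vert y\Vert)\quad \forall z\in Z,\ y\in\ell_\infty(\alpha').$$
The task is to move the range of $\phi$ into $Y$ without destroying this inequality. The tool for this is the $\kappa$ almost isometric ideal property of $Y$ in $X$.

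Consider the subspace
$$E:=\overline{\spann}\big(Z\cup \phi(\ell_\infty(\alpha'))\big)\subseteq X.$$
Its density is at most $\max\{\vert Z\vert,\dens(\ell_\infty(\alpha'))\}+\omega_0$. Since $\dens(\ell_\infty(\alpha'))\leq 2^{\alpha'}$ and $\vert Z\vert=\beta'$ for some $\beta'<\beta$, the hypothesis $\max\{2^{\alpha'},\beta'\}<\kappa$ gives $\dens(E)<\kappa$. If $\vert Z\vert$ is finite, the bound $2^{\alpha'}<\kappa$ still handles this, because $\kappa$ is infinite. This cardinality check is exactly why the assumption involves $2^{\alpha'}$: $\ell_\infty(\alpha')$ has density $2^{\alpha'}$ when $\alpha'$ is infinite, and $\alpha'$ is finite otherwise. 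By the definition of $\kappa$-ai ideal we obtain an $\varepsilon$-isometry $T:E\longrightarrow Y$ that fixes the points of $E\cap Y$, and in particular fixes every $z\in Z$.

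Define $\psi:=T\circ\phi:\ell_\infty(\alpha')\longrightarrow Y$, which is a bounded operator. For $z\in Z$ and $y\in\ell_\infty(\alpha')$ we have $z+\psi(y)=T(z+\phi(y))$. The lower $\varepsilon$-isometry bound then yields
$$\Vert z+\psi(y)\Vert\geq (1-\varepsilon)\Vert z+\phi(y)\Vert\geq (1-\varepsilon)^2(\Vert z\Vert+\Vert y\Vert).$$
Since $\varepsilon$ was arbitrary (run the argument with $\varepsilon/2$ in place of $\varepsilon$), this is the defining inequality of $(\ell_\infty(\alpha),\beta)$-octahedrality for $Y$.

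The only delicate point is the density estimate for $E$, which is what forces the hypothesis on $\kappa$. Everything else is a direct transfer via $T$, using that $T$ is linear and fixes $Z$.
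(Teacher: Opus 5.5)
Your argument is the same as the paper's. You apply the property in $X$ to get $\phi$, take the closed span $E$ of $Z$ and $\phi(\ell_\infty(\alpha'))$, whose density is at most $\max\{\beta',2^{\alpha'}\}<\kappa$, and compose with the $\varepsilon$-isometry $T\colon E\to Y$ that fixes $E\cap Y$.

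The one step you skip, which the paper includes, is norm control. Definition~\ref{def:linfikappaocta} does not say it explicitly, but the operator there is meant to satisfy $\Vert\phi\Vert\le 1$. This is what makes $\phi\circ P$ a norm-one operator in Theorem~\ref{theo:linftykappaoh}, and without it that theorem does not go through. Your $T\circ\phi$ only has norm at most $(1+\varepsilon)\Vert\phi\Vert$. The paper therefore starts with $\Vert\phi\Vert\le 1$ and passes to $\Phi:=(T\circ\phi)/(1+\varepsilon)$. This operator has norm at most $1$ and satisfies $\Vert z+\Phi(y)\Vert\ge\frac{(1-\varepsilon)^2}{1+\varepsilon}(\Vert z\Vert+\Vert y\Vert)$. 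You should add this rescaling.

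A minor slip: your bound $\max\{\vert Z\vert,\dens(\ell_\infty(\alpha'))\}+\omega_0$ on $\dens(E)$ only gives $\dens(E)\le\omega_0$ when $\kappa=\omega_0$, not $\dens(E)<\kappa$. In that case the hypothesis forces $\alpha'$ and $\beta'$ to be finite. So $E$ is finite-dimensional and $\dens(E)=\dim E<\omega_0$ directly.
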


\begin{proof}
Let $\alpha'<\alpha, \beta'<\beta$, $\{y_i: i<\beta'\}\subseteq S_Y$ be a subset of $S_X$ and $\varepsilon>0$. Since $S_Y\subseteq S_X$ and $X$ is $(\ell_\infty(\alpha),\beta)$-octahedral we can find a bounded operator $\phi:\ell_\infty(\alpha')\longrightarrow X$ with $\Vert \phi\Vert\leq 1$ and 
$$\Vert y_i+\phi(\lambda)\Vert\geq (1-\varepsilon)(1+\Vert \lambda\Vert)$$
holds for every $i<\beta'$ and every $\lambda\in \ell_\infty(\alpha')$. 

Set $Z:=\overline{\spann}(\{y_i: i<\beta'\}\cup \phi(\ell_\infty(\alpha')))\subseteq X$. Observe that $\dens(Z)\leq \max\{\beta',\dens(\ell_\infty(\alpha'))\}=\max\{\beta',2^{\alpha'}\}<\kappa$. Since $Y$ is a $\kappa$ ai ideal in $X$ we can find a bounded operator $T:Z\longrightarrow Y$ with the properties that $T(z)=z$ holds for $z\in Z\cap Y$ (in particular $T(y_i)=y_i$ for every $i<\beta'$) and 
$$(1-\varepsilon)\Vert z\Vert\leq \Vert T(z)\Vert\leq (1+\varepsilon)\Vert z\Vert$$
holds for every $z\in Z$. 

If we take $T\circ\phi:\ell_\infty(\alpha')\longrightarrow Y$ we have that $\Vert T\circ\phi\Vert\leq 1+\varepsilon$ and, given $i<\beta'$ and $\lambda\in \ell_\infty(\alpha')$, we have
\[\begin{split}
\Vert y_i+T(\phi(\lambda))\Vert=\Vert T(y_i)+T(\phi(\lambda))\Vert& =\Vert T(y_i+\phi(\lambda))\Vert\geq (1-\varepsilon)\Vert y_i+\phi(\lambda)\Vert\\
& \geq (1-\varepsilon)^2(1+\Vert\lambda\Vert)
\end{split}\]
Now let $\Phi:=\frac{T\circ\phi}{1+\varepsilon}$, which satisfies $\Vert \Phi\Vert\leq 1$. Moreover, given $i<\beta'$ and $\lambda\in\ell_\infty(\alpha')$, we have
\[\begin{split}
\Vert y_i+\Phi(\lambda)\Vert& =\left\Vert y_i+T\left(\phi\left(\frac{\lambda}{1+\varepsilon}\right)\right)\right\Vert\geq (1-\varepsilon)^2\left(1+\frac{\Vert \lambda\Vert}{1+\varepsilon}\right)\\
& \geq (1-\varepsilon)^2\left(\frac{1}{1+\varepsilon}+\frac{\Vert \lambda\Vert}{1+\varepsilon}\right )=\frac{(1-\varepsilon)^2}{1+\varepsilon}(1+\Vert \lambda\Vert).
\end{split}\]
The arbitrariness of $\varepsilon$ concludes the proof.
\end{proof}

Observe that with a similar (indeed, a simpler) proof we can obtain a rigid version in the following lines.

\begin{proposition}\label{prop:heretransiso}
Let $X$ be a Banach space which is $(\ell_\infty(\alpha),\beta)$-rigid octahedral. Let $\kappa$ be an infinite cardinal with $\max\{2^{\alpha'},\beta'\}<\kappa$ holds for every $\alpha'<\alpha, \beta'<\beta$. If $Y\subseteq X$ is a $\kappa$- isometric ideal in $X$ then $Y$ is $(\ell_\infty(\alpha),\beta)$-rigid octahedral.
\end{proposition}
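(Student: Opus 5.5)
The argument follows the proof of Proposition~\ref{prop:heretransaii} step by step, with every $\varepsilon$-estimate replaced by an exact identity. Fix $\alpha'<\alpha$, $\beta'<\beta$ and a family $\{y_i: i<\beta'\}\subseteq S_Y$. We may restrict to norm-one vectors: the defining identity is trivial for $z=0$, and rigid octahedrality applied to the normalised vectors $z/\Vert z\Vert$ gives the identity for $z$ itself. Indeed, if $\Vert u+\phi(y)\Vert=1+\Vert y\Vert$ holds for all $y$, then applying it to $y/\Vert z\Vert$ and multiplying by $\Vert z\Vert$ yields $\Vert z+\phi(y)\Vert=\Vert z\Vert+\Vert y\Vert$.

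Since $S_Y\subseteq S_X$ and $X$ is $(\ell_\infty(\alpha),\beta)$-rigid octahedral, we obtain an operator $\phi:\ell_\infty(\alpha')\longrightarrow X$ with
$$\Vert y_i+\phi(\lambda)\Vert=1+\Vert\lambda\Vert\quad \forall i<\beta',\ \lambda\in\ell_\infty(\alpha').$$
Taking $\lambda\neq 0$ and using the triangle inequality gives $1+\Vert\lambda\Vert\leq 1+\Vert\phi(\lambda)\Vert$, so $\Vert\phi(\lambda)\Vert\geq\Vert\lambda\Vert$. Thus $\phi$ is norm-one; if a norm bound is part of the definition, we simply use $\Vert\phi\Vert\leq 1$.

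Next put $Z:=\clspan(\{y_i: i<\beta'\}\cup\phi(\ell_\infty(\alpha')))$. Exactly as before, $\dens(Z)\leq\max\{\beta',2^{\alpha'}\}<\kappa$. Because $Y$ is a $\kappa$-isometric ideal in $X$, there is a linear isometry $T:Z\longrightarrow Y$ fixing every point of $Z\cap Y$; in particular $T(y_i)=y_i$.

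Now set $\Phi:=T\circ\phi:\ell_\infty(\alpha')\longrightarrow Y$. Then $\Vert\Phi\Vert=\Vert\phi\Vert\leq 1$, and since $T$ is linear, isometric and fixes each $y_i$,
$$\Vert y_i+\Phi(\lambda)\Vert=\Vert T(y_i+\phi(\lambda))\Vert=\Vert y_i+\phi(\lambda)\Vert=1+\Vert\lambda\Vert.$$
This is the required rigid condition in $Y$.

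No real obstacle arises. Unlike the almost isometric case, no rescaling by $1+\varepsilon$ and no limiting argument are needed. The only points to check are that $T$ is linear on all of $Z$, so that it respects the sum $y_i+\phi(\lambda)$, and the density estimate $\dens(\ell_\infty(\alpha'))\leq 2^{\alpha'}$, which is what forces the hypothesis $\max\{2^{\alpha'},\beta'\}<\kappa$.
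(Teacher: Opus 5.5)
Your proposal is correct and is exactly the argument the paper intends; the paper gives no separate proof and only says that the rigid case follows by a similar, simpler version of the proof of Proposition~\ref{prop:heretransaii}. With an exact isometry $T$ fixing the $y_i$, $T\circ\phi$ works directly, with no $(1\pm\varepsilon)$ losses and no rescaling. One small slip: the triangle inequality only gives $\Vert\phi\Vert\geq 1$, so ``thus $\phi$ is norm-one'' needs the norm bound (or a scaling argument), but this is harmless since $\Vert T\circ\phi\Vert=\Vert\phi\Vert$ in any case.
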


\begin{remark}
Observe that in the above result, the requirement $\max\{2^{\alpha'},\beta'\}<\kappa$ for every $\alpha'<\alpha$ and $\beta'<\beta$ replaces the (at first glance) more natural requirement $\max\{2^\alpha,\beta\}\leq \kappa'$ in order to avoid problems when $\alpha=\omega_0$. 
\end{remark}

Now we can get the following result.

\begin{theorem}
Let $X$ be a Banach space and let $\kappa$ be an infinite cardinal. Then:
\begin{enumerate}
    \item If $X$ is a space of almost universal disposition for spaces of density $<\kappa$, then $X$ is $(\ell_\infty(\alpha),\beta)$-octahedral for any $\alpha,\beta$ such that $\max\{2^{\alpha'},\beta'\}<\kappa$ holds for every $\alpha'<\alpha, \beta'<\beta$. 

    \item If $X$ is a space of universal disposition for spaces of density $<\kappa$, then $X$ is $(\ell_\infty(\alpha),\beta)$-octahedral for any $\alpha,\beta$ such that $\max\{2^{\alpha'},\beta'\}<\kappa$ holds for every $\alpha'<\alpha, \beta'<\beta$. 
\end{enumerate}
\end{theorem}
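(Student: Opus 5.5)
The plan is to follow the scheme described in the preliminaries on spaces of universal disposition. We isolate a property (P) that satisfies the two required conditions: every Banach space embeds into a space with (P), and (P) passes to $\kappa$ (almost) isometric ideals. Here (P) is $(\ell_\infty(\alpha),\beta)$-(rigid) octahedrality, with $\alpha,\beta$ fixed so that $\max\{2^{\alpha'},\beta'\}<\kappa$ for all $\alpha'<\alpha$, $\beta'<\beta$. Propositions~\ref{prop:heretransaii} and \ref{prop:heretransiso} already provide the heredity condition, for $\kappa$-ai ideals and $\kappa$-isometric ideals respectively. By \cite[Theorem 4.2]{mrz25}, a space $X$ of (almost) universal disposition for density $<\kappa$ is a $\kappa$ (almost) isometric ideal in every superspace. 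So it suffices to embed $X$ isometrically into some $(\ell_\infty(\alpha),\beta)$-rigid octahedral space $\widetilde X$, since rigid octahedrality trivially implies the non-rigid version. For (1) we then apply Proposition~\ref{prop:heretransaii}, and for (2) Proposition~\ref{prop:heretransiso}, which even gives the rigid conclusion.

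The main step is the construction of $\widetilde X\supseteq X$ that is $(\ell_\infty(\alpha),\beta)$-rigid octahedral. The first thing I would try is an $\ell_1$-sum that absorbs a copy of $\ell_\infty(\gamma)$ in an $\ell_1$-orthogonal way. Take $\gamma$ large, say $\gamma\geq\alpha$, and set
$$\widetilde X:=\Big(X\oplus \ell_\infty(\gamma)\Big)_{1}\ \text{ or rather } \ \widetilde X:=\Big(X\oplus \big(\textstyle\bigoplus_{\delta<\beta}\ell_\infty(\gamma)\big)_{\ell_1}\Big)_{1}.$$
Given $Z\subseteq \widetilde X$ with $|Z|<\beta$, each $z\in Z$ has countable support in the $\ell_1$-sum, so the union of supports has size $<\beta$ when $\beta$ is uncountable, and we can pick a summand index $\delta$ outside it. For the countable case $\beta=\omega_0$ the set $Z$ is finite and the same argument works. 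Next we map $\ell_\infty(\alpha')$ isometrically onto a copy inside the $\delta$-th summand $\ell_\infty(\gamma)$. Then for $z$ supported away from $\delta$ we have $\|z+\phi(y)\|=\|z\|+\|y\|$ exactly by the $\ell_1$-norm. This gives rigid octahedrality with $\varepsilon=0$.

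There is one subtlety. The support of a vector in an $\ell_1$-sum of $\beta$ summands is countable, so the argument needs more than $|Z|\cdot\omega_0$ summands. I would therefore use an index set of cardinality $\max\{\beta,\omega_1\}$ rather than $\beta$, which is harmless since the size of $\widetilde X$ plays no role: we only need $X\subseteq\widetilde X$ isometrically. With $\widetilde X$ in hand, the theorem follows directly from the ideal characterisation and the two heredity propositions, because the density hypothesis $\max\{2^{\alpha'},\beta'\}<\kappa$ is exactly the one those propositions require.
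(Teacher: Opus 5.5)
Your argument is correct, but it reaches the conclusion through a different ambient space than the paper. The paper also reduces everything to Propositions~\ref{prop:heretransaii} and \ref{prop:heretransiso} via the ideal characterisation of (A)UD$_{<\kappa}$ spaces. Its superspace, however, is an ultrapower $C(K)_\mathcal U$, built in three steps:
\begin{enumerate}
    \item from an inspection of the proof of \cite[Theorem 8.4]{almr26}, $X$ embeds isometrically into a $\beta$-octahedral $C(K)$ space;
    \item Proposition~\ref{prop:C(K)normal} upgrades this to $(\ell_\infty(\omega_0),\beta)$-octahedrality;
    \item Theorem~\ref{theo:ultrapotlargos} gives an ultrapower $C(K)_\mathcal U$ that is $(\ell_\infty(\alpha),\beta)$-rigid octahedral, into which $X$ still embeds.
\end{enumerate}

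Your space $\widetilde X=\big(X\oplus(\bigoplus_{\delta\in\Delta}\ell_\infty(\gamma))_{\ell_1}\big)_1$, with $\gamma\geq\alpha$ and $|\Delta|=\max\{\beta,\omega_1\}$, replaces all of this with a direct verification. For $|Z|<\beta$ the union of the countable supports has cardinality $<|\Delta|$. Placing an isometric copy of $\ell_\infty(\alpha')$ in an unused summand then gives $\Vert z+\phi(y)\Vert=\Vert z\Vert+\Vert y\Vert$ exactly, with $\Vert\phi\Vert=1$, as the heredity propositions require.

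Your route is self-contained and uniform. It needs no reaping numbers, no inspection of an external proof and no ultrapowers. It also handles the boundary cases $\alpha=\omega_0$ or $\beta=\omega_0$ without extra case analysis; by contrast, Proposition~\ref{prop:C(K)normal} and Theorem~\ref{theo:ultrapotlargos} as stated require uncountable cardinals. Finally, it works directly under the stated hypothesis $\max\{2^{\alpha'},\beta'\}<\kappa$. The paper's route only adds that the superspace can be taken to be an ultrapower of a $C(K)$ space, which this theorem does not use.

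One slip to fix: in your second paragraph you say the case $\beta=\omega_0$ works with $\beta$ summands. That is false as written, since finitely many vectors can have supports covering all of $\omega_0$. It is your later choice $|\Delta|=\max\{\beta,\omega_1\}$ that makes the argument go through.
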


\begin{proof} Let $\alpha,\beta$ be two infinite cardinals such that $\max\{2^\alpha,\beta\}<\kappa$. An inspection in the proof of \cite[Theorem 8.4]{almr26} reveals that $X$ embeds in a $C(K)$ space which is $\beta$ octahedral, so $C(K)$ is $(\ell_\infty(\omega_0),\beta)$ octahedral by Proposition~\ref{prop:C(K)normal}.


In virtue of Theorem~\ref{theo:ultrapotlargos} we can find a free ultrafilter on a sufficiently big set $I$ such that $C(K)_\mathcal U$ is $(\ell_\infty(\alpha),\beta)$-rigid octahedral. As $X$ embeds isometrically in $C(K)$ then it embeds isometrically in $C(K)_\mathcal U$. 

On the one hand, if $X$ is a space of almost universal disposition with respect to spaces of density $<\kappa$, then $X$ is a $\kappa$ ai-ideal in $C(K)_\mathcal U$ in virtue of \cite[Theorem 4.2]{mrz25}, and then $X$ is $(\ell_\infty(\kappa),\beta)$-octahedral by an application Proposition~\ref{prop:heretransaii}, obtaining (1). If, on the other hand, $X$ is a space of universal disposition with respect to spaces of density $<\kappa$, then $X$ is a $\kappa$ isometric ideal in $C(K)_\mathcal U$ in virtue of \cite[Theorem 4.2]{mrz25}, and then $X$ is $(\ell_\infty(\kappa),\beta)$-rigid octahedral by an application Proposition~\ref{prop:heretransiso}, obtaining (2) and concluding the theorem.
\end{proof}

\section{Applications}\label{sect:applications}

In this section we will apply all the above discussion in order to distinguish different notions of octahedrality in spaces of operators. In order to do so, given an uncountable cardinal $\kappa$ let us denote by 
$$L(Y,X)_\kappa:=\{T:Y\longrightarrow X: \dens(T(Y))<\kappa\}\subseteq L(Y,X).$$
We will denote $L(Y,X)_{\omega_0}:=\overline{F(Y,X)}$.

Observe that $L(Y,X)_\kappa$ is a closed subspace of $L(Y,X)$ for any infinite cardinal $\kappa$ (hence complete) and that the following conditions hold:
\begin{enumerate}
    \item If $T\in L(Y,X)_\kappa$ and $S\in L(Z,Y)$ then $T\circ S\in L(Z,X)_\kappa$ and,
    \item If $T\in L(Y,X)_\kappa$ and $R\in L(X,W)$ then $R\circ T\in L(Y,W)_\kappa$. 
\end{enumerate}

Now we have the following result:

\begin{proposition}\label{prop:ejenocardisupe}
Let $\kappa$ be an infinite cardinal, let $X$ be any $(\ell_\infty(\kappa^+),\kappa)$-rigid octahedral space. Then $L(\ell_2(\kappa^+),X)_\kappa$ is $\kappa$-rigid octahedral but fails to be $\kappa^+$-octahedral.
\end{proposition}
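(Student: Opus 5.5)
The plan has two halves: a positive part adapting the proof of Theorem~\ref{theo:linftykappaoh}, and a negative part applying Theorem~\ref{theo:condinece}(1) with $\kappa^+$ in place of $\kappa$.

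\emph{$\kappa$-rigid octahedrality.} Fix $\kappa'<\kappa$ and $\{T_\gamma:\gamma<\kappa'\}\subseteq S_{L(\ell_2(\kappa^+),X)_\kappa}$. For each $\gamma<\kappa'$ and $n\in\mathbb N$ choose $y_\gamma^n\in S_{\ell_2(\kappa^+)}$ with $\Vert T_\gamma(y_\gamma^n)\Vert>1-\frac1n$. Put $V:=\clspan\{y_\gamma^n\}$ and $G:=\{T_\gamma(y_\gamma^n)\}\subseteq X$.
\begin{itemize}
\item $\dens(V)\leq\max\{\kappa',\omega_0\}$, so $V$ embeds isometrically by some $j$ into $\ell_\infty(\alpha')$ for an infinite $\alpha'\leq\max\{\kappa',\omega_0\}<\kappa^+$.
\item $\vert G\vert<\kappa$ when $\kappa$ is uncountable. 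When $\kappa=\omega_0$ the indices $n$ make $G$ countable, so there I would take a single $y_\gamma$ per $\gamma$ (not only a sequence); if this causes trouble, use the definition directly on the finitely many $T_\gamma$.
\end{itemize}
The rigid hypothesis then gives $\phi:\ell_\infty(\alpha')\to X$ with $\Vert z+\phi(\lambda)\Vert=\Vert z\Vert+\Vert\lambda\Vert$ for all $z\in G$ and $\lambda\in\ell_\infty(\alpha')$.

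The key difference from Theorem~\ref{theo:linftykappaoh} is the choice of extension. Extending $j$ to all of $\ell_2(\kappa^+)$ via the injectivity of $\ell_\infty(\alpha')$ would make the range of $T$ equal to $\phi(\ell_\infty(\alpha'))$, whose density can be as large as $2^{\alpha'}$. Instead I exploit the Hilbert structure. Let $P_V$ be the orthogonal projection onto $V$, which has norm one, and set $T:=\phi\circ j\circ P_V$. Then $T(\ell_2(\kappa^+))\subseteq\phi(j(V))$ has density $\leq\dens(V)<\kappa$ when $\kappa$ is uncountable. When $\kappa=\omega_0$, $V$ is finite dimensional, so $T$ has finite rank. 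Hence $T\in L(\ell_2(\kappa^+),X)_\kappa$ and $\Vert T\Vert\leq1$. Since $P_V y_\gamma^n=y_\gamma^n$ and $j$ is isometric, the computation in Theorem~\ref{theo:condisufi} gives
\[
\Vert T_\gamma+T\Vert\geq \Vert T_\gamma(y_\gamma^n)\Vert+1>2-\tfrac1n \quad\text{for all } n,
\]
so $\Vert T_\gamma+T\Vert=2$ for every $\gamma<\kappa'$. Taking $\gamma$ with $\lambda=0$ aside, the same identity with $z=0$ shows $\Vert T\Vert=1$, so $T\in S_{L(\ell_2(\kappa^+),X)_\kappa}$.

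\emph{Failure of $\kappa^+$-octahedrality.} Suppose $H:=L(\ell_2(\kappa^+),X)_\kappa$ were $\kappa^+$-octahedral. The cardinal $\kappa^+$ is uncountable, $\ell_2(\kappa^+)$ is uniformly convex, and $H\supseteq F(\ell_2(\kappa^+),X)$. So Theorem~\ref{theo:condinece}(1) applies with $\kappa^+$ in place of $\kappa$, taking $W:=\ell_2(\kappa)\subseteq\ell_2(\kappa^+)$ (density $\kappa<\kappa^+$) and $Z:=\{0\}$. An inspection of that proof shows the operator $\phi$ it produces lies in $S_H$. Taking $z=0$ gives $\Vert\phi(w)\Vert\geq(1-\varepsilon)\Vert w\Vert$ on $W$, so $\phi|_W$ is an isomorphism onto its image. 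Hence $\dens(\phi(\ell_2(\kappa^+)))\geq\dens(W)=\kappa$. For $\kappa=\omega_0$ this means $\phi$ has infinite-dimensional range and so is not compact, hence not in $\overline{F}$. In both cases this contradicts $\phi\in H$.

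The main obstacle is the density control of $T$ in the positive part, and the orthogonal projection resolves it. The countable-$G$ subtlety when $\kappa=\omega_0$ is the only other point I expect to need care.
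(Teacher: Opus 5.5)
Your argument follows the paper's proof. You choose norming vectors $y_\gamma^n$ and embed their closed span isometrically into an $\ell_\infty$ of size $<\kappa^+$. You apply the rigid hypothesis to $G=\{T_\gamma(y_\gamma^n)\}$ and compose with the orthogonal projection onto the span, which keeps $\dens(T(\ell_2(\kappa^+)))<\kappa$. The negative half is the same application of Theorem~\ref{theo:condinece}(1), and you spell it out more carefully, in particular noting that the $\phi$ produced lies in $H$. For uncountable $\kappa$ the proposal is complete.

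\textbf{The gap is at $\kappa=\omega_0$.} You flag this case but do not resolve it. Replacing the sequence $(y_\gamma^n)_n$ by a single arbitrary $y_\gamma$ does make $G$ finite and $V$ finite-dimensional, but it costs rigidity. From $\Vert T_\gamma(y_\gamma)\Vert>1-\varepsilon$ you only get $\Vert T_\gamma+T\Vert>2-\varepsilon$.

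The missing idea is norm attainment. Each $T_\gamma\in\overline{F(\ell_2(\omega_1),X)}$ is compact and $\ell_2(\omega_1)$ is reflexive. Hence $\Vert T_\gamma(\cdot)\Vert$ is weakly continuous on the weakly compact unit ball, and there is $y_\gamma\in S_{\ell_2(\omega_1)}$ with $\Vert T_\gamma(y_\gamma)\Vert=1$. With these finitely many $y_\gamma$:
\begin{itemize}
\item $G=\{T_\gamma(y_\gamma)\}$ is finite, so the hypothesis with $\beta=\omega_0$ applies;
\item $V=\lspan\{y_\gamma\}$ is finite-dimensional and embeds isometrically into $\ell_\infty(\omega_0)$;
\item $T=\phi\circ j\circ P_V$ has finite rank;
\item $\Vert T_\gamma+T\Vert\geq\Vert T_\gamma(y_\gamma)\Vert+\Vert y_\gamma\Vert=2$.
\end{itemize}
The paper's written proof has the same defect at $\kappa=\omega_0$. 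It applies a hypothesis valid only for finite sets to the countable set $\{T_\alpha(y_\alpha^n)\}$. Its $T$ is also isometric on the possibly infinite-dimensional $Z$, hence not compact. Your instinct to treat this case separately is right; it just needs the norm-attainment step.

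\textbf{Two small clean-ups.}
\begin{itemize}
\item $\Vert T\Vert=1$ needs no appeal to ``$z=0$''. The rigid identity $\Vert z+\phi(t\lambda)\Vert=\Vert z\Vert+t\Vert\lambda\Vert$, divided by $t$ with $t\to\infty$, shows $\phi$ is an isometry.
\item In the negative half, $\phi$ is non-compact because it is an isomorphism on the infinite-dimensional $W=\ell_2(\omega_0)$. Merely having infinite-dimensional range would not suffice.
\end{itemize}
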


\begin{proof}
The failure of $\kappa^+$-octahedrality follows from a literal application of Theorem~\ref{theo:condinece}. Let us prove that $L(\ell_2(\kappa^+),X)$ is $\kappa$-rigid octahedrality. In order to do so, select $\kappa'<\kappa$ and $\{T_\alpha:\alpha<\kappa'\}\subseteq S_{L(Y,X)_\kappa}$, and let us find an operator $T:Y\longrightarrow X$ in $L(Y,X)_\kappa$ such that
$$\Vert T_\alpha+T\Vert=2$$
holds for every $\alpha<\kappa'$. In order to do so, given $n\in\mathbb N$, find $y_\alpha^n\in S_{\ell_2(\kappa^+)}$ such that $\Vert T(y_\alpha^n)\Vert>1-\frac{1}{n}$. Define $Z:=\overline{\spann}\{y_\alpha^n: \alpha<\kappa', n\in\mathbb N\}$, which is a subspace of $\ell_2(\kappa^+)$ of density at most $\max\{\kappa',\omega_0\}\leq \kappa$. Consequently, there exists an isometric linear embedding $\psi:Z\longrightarrow \ell_\infty(\kappa)$. Since $X$ is assumed to be $(\ell_\infty(\kappa^+),\kappa)$ octahedral we can find a norm-one operator $\Phi:\ell_\infty(\kappa)\longrightarrow X$ such that
$$\Vert w+\Phi(\lambda)\Vert=\Vert w\Vert+\Vert \lambda\Vert$$
holds for every $w\in W:=\overline{\spann}\{T_\alpha(y_\alpha^n): n\in\mathbb N, \alpha<\kappa'\}\subseteq X$ and every $\lambda\in \ell_\infty(\alpha)$. Finally, since every closed subspace of $\ell_2(\kappa^+)$ is $1$ complemented consider $P:\ell_2(\kappa^+)\longrightarrow Z$ to be a norm-one operator such that $P(z)=z$ holds for every $z\in Z$. Now $T=\Phi\circ \psi\circ P:\ell_2(\kappa^+)\longrightarrow X$ is the desired operator. Indeed, $T\in L(\ell_2(\kappa^+),X)_\kappa$ since $\psi\in L(Z,\ell_\infty(\kappa))_\kappa$ and $\Vert T\Vert=1$. Moreover, given $\alpha<\kappa'$ and $n\in\mathbb N$, we get
\[\begin{split}
\Vert T_\alpha+T\Vert\geq \Vert T_\alpha(y_\alpha^n)+\Phi(\psi(P(y_\alpha^n)))\Vert & > 1-\frac{1}{n}+\Vert \psi(P(y_\alpha^n))\Vert\\
& =1-\frac{1}{n}+\Vert P(y_\alpha^n)\Vert\\
& >2\left(1-\frac{1}{n}\right).
\end{split}\]
The arbitrariness of $n$ implies $\Vert T_\alpha+T\Vert=2$, and the proof is finished.
\end{proof}

Let us end with the following remark.

\begin{remark}\label{remark:rigidohnosepara}
\begin{enumerate}
    \item The above example for $\kappa=\omega_0$ shows that $\ell_2\iten X$ is $\omega_0$-rigid octahedral but fails $\omega_1$ octahedrality. Such an example, up to the best of our knowledge, was not previously known in the literature.
    \item The result above holds replacing $\ell_2(\kappa^+)$ with $\ell_p(\kappa^+)$ for $1<p<\infty$ because such spaces satisfy that given any subspace $Z\subseteq \ell_p(\kappa^+)$ with $\dens(Z)<\kappa^+$  there exists a 1-complemented subspace $Y\subseteq \ell_p(\kappa^+)$ with $\dens(Z)=\dens(Y)$ and $Z\subseteq Y$.
\end{enumerate}
\end{remark}



\section{A byproduct: transfinite octahedrality in ultrapowers}\label{sect:byprodlargeocta}

In this section we will take advantage of the techniques of Section~\ref{section:ultrapowers} to provide a couple of results which show how ultrapower spaces naturally stregthen the transfinite octahedrality of the underlying space and, in particular, we will obtain an improvement of a result by J. D. Hardtke \cite[Theorem 4.1]{hard18}.

Indeed, \cite[Theorem 4.1]{hard18} establishes that, given a Banach space $X$, then $X$ is octahedral if, and only if, $X_\mathcal U$ is rigid octahedral for every free ultrafilter over $\mathbb N$. With a bit of extra work, we will obtain $\omega_1$-rigid octahedrality on $X_\mathcal U$ in the next result.

\begin{proposition}\label{prop:improhardtke}
Let $X$ be a Banach space. The following are equivalent:
\begin{enumerate}
    \item $X$ is octahedral.
    \item $X_\mathcal U$ is $\omega_1$-rigid octahedral for every countably incomplete ultrafilter $\mathcal U$ over any infinite set $I$.
    \item $X_\mathcal U$ is $\omega_1$-rigid octahedral for every free ultrafilter $\mathcal U$ over $\mathbb N$.
    \item $X_\mathcal U$ is rigid octahedral for every countably incomplete ultrafilter $\mathcal U$ over any infinite set $I$.
    \item $X_\mathcal U$ is rigid octahedral for every free ultrafilter $\mathcal U$ over $\mathbb N$.
\end{enumerate}
\end{proposition}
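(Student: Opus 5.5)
The plan is to prove the cycle of implications $(1)\Rightarrow(2)\Rightarrow(3)\Rightarrow(5)$, together with $(2)\Rightarrow(4)\Rightarrow(5)$ and $(5)\Rightarrow(1)$.

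The implications $(2)\Rightarrow(3)$ and $(4)\Rightarrow(5)$ hold because every free ultrafilter over $\mathbb N$ is countably incomplete. The implications $(2)\Rightarrow(4)$ and $(3)\Rightarrow(5)$ hold because $\omega_1$-rigid octahedrality implies $\omega_0$-rigid octahedrality. For $(5)\Rightarrow(1)$ we invoke \cite[Theorem 4.1]{hard18}. A direct argument is also available: $X$ is an almost isometric ideal in $X_\mathcal U$ (see the preliminaries), and octahedrality passes to almost isometric ideals by the same mechanism as in Proposition~\ref{prop:heretransaii}. Given $x_1,\dots,x_n\in S_X$, take $u\in S_{X_\mathcal U}$ with $\Vert x_k+u\Vert=2$ for all $k$. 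Then $\varepsilon$-isometrically map $\lspan\{x_1,\dots,x_n,u\}$ into $X$ fixing the $x_k$, and normalise the image of $u$.

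The main content is $(1)\Rightarrow(2)$, and here we mimic the proof of Theorem~\ref{theo:ultralinfi} with $\ell_\infty$ replaced by a single vector. Let $\mathcal U$ be countably incomplete over $I$ and fix a decreasing sequence $(A_n)\subseteq\mathcal U$ with $A_1=I$ and $\bigcap_n A_n=\emptyset$. Take a countable family $\{[z_i^m]_\mathcal U: m\in\mathbb N\}\subseteq S_{X_\mathcal U}$; since any $\kappa'<\omega_1$ is countable, this suffices. Choose representatives with $z_i^m\in S_X$ for every $i$.

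For $i\in A_n\setminus A_{n+1}$ we apply the octahedrality of $X$ to the finite set $\{z_i^1,\dots,z_i^n\}$ with $\varepsilon=\frac1n$. This yields $x_i\in S_X$ with $\Vert z_i^m+x_i\Vert\geq 2-\frac1n$ for $m\leq n$. Put $x:=[x_i]_\mathcal U\in S_{X_\mathcal U}$.

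Fix $m$ and $k\geq m$. For every $i\in A_k$ the index $n$ with $i\in A_n\setminus A_{n+1}$ satisfies $n\geq k\geq m$. Hence
$$\Vert z_i^m+x_i\Vert\geq 2-\tfrac1k.$$
Since $A_k\in\mathcal U$, we get $\Vert [z_i^m]+x\Vert=\lim_\mathcal U\Vert z_i^m+x_i\Vert\geq 2-\frac1k$. Letting $k\to\infty$ gives $\Vert [z_i^m]+x\Vert=2$ for every $m$, which is $\omega_1$-rigid octahedrality.

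The only delicate point is the diagonal bookkeeping: on the $n$-th layer $A_n\setminus A_{n+1}$ we must handle the first $n$ vectors of the countable family with error $\frac1n$. This is exactly what the countable incompleteness partition provides, and it is why countability of the family (hence $\omega_1$, not more) is the natural limit of this argument. A minor technicality is choosing unit-sphere representatives, which is justified by the remark after the definition of ultrapowers.
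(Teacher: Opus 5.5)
Your proposal is correct and follows the paper's proof essentially verbatim: the same trivial implications, the citation of \cite[Theorem 4.1]{hard18} for $(5)\Rightarrow(1)$, and for $(1)\Rightarrow(2)$ the same layer-wise choice of $x_i$ on $A_n\setminus A_{n+1}$ handling the first $n$ vectors with error $\frac1n$. Your optional direct argument for $(5)\Rightarrow(1)$, via $X$ being an almost isometric ideal in $X_\mathcal U$, is also valid but not needed.
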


\begin{proof}
It is immediate that (2) implies (3), (4) and (5), and (5)$\Rightarrow$(1) was proved by J.~D.~Hardtke in \cite[Theorem 4.1]{hard18}, so let us prove (1)$\Rightarrow$(2).

In order to do so let $I$ be any infinite set, $\mathcal U$ be any countably incomplete ultrafilter $\mathcal U$ over $I$, and let us prove that $X_\mathcal U$ is $\omega_1$-rigid octahedral. In order to do so, let $\{[x_i^n]: i\in I, n\in\mathbb N\}\subseteq S_X$, and let us find $[x_i]\in S_{X_\mathcal U}$ such that $\Vert [x_i^n]+[x_i]\Vert=2$ holds for every $n\in\mathbb N$. Up to a change of representative we can assume WLOG that $\Vert x_i^n\Vert=1$ holds for every $i\in I$ and every $n\in\mathbb N$ \cite[Remark 2.4]{mrz26}. 

Recall that since $\mathcal U$ is countably incomplete we can assume that there exists a decreasing sequence $\{A_n\}\subseteq \mathcal U$ such that $\bigcap\limits_{n\in\mathbb N} A_n=\emptyset$. We can assume as well that $A_1=I$, and then $I=\bigcup\limits_{n\in \mathbb N} A_n\setminus A_{n+1}$ forms a partition of the set $I$.

It is time to define $[x_i]$. To this end, given $i\in I$, since $\{A_n\setminus A_{n+1}: n\in\mathbb N\}$ is a partition of $I$ there exists a unique $n\in\mathbb N$ such that $i\in A_n\setminus A_{n+1}$. Now, since $X$ is octahedral we can select $x_i\in S_X$ satisfying that
$$\Vert x_i^k+x_i\Vert>2-\frac{1}{n}\ \forall 1\leq k\leq n.$$
Observe that $[x_i]\in S_{X_\mathcal U}$. Let us prove that the above element satisfies our requirements. In order to do so, select any $m\in \mathbb N$ and $\varepsilon>0$. Now the set
$$A:=\{i\in I: \vert \Vert [x_i^m]+[x_i]\Vert-\Vert x_i^m+x_i\Vert\vert<\varepsilon\}$$
is an element of $\mathcal U$. Next, select $n\geq m$ such that $\frac{1}{n}<\varepsilon$, and observe that $A\cap A_n\in\mathcal U$ and, in particular, it is non-empty. Thus, given $i\in A\cap A_n$ we get
\[\begin{split}
\Vert [x_i^m]+[x_i]\Vert\mathop{>}\limits^{i\in A}\Vert x_i^m+x_i\Vert-\varepsilon\mathop{>}\limits^{i\in A_n}2-\frac{1}{n}-\varepsilon\mathop{>}^{\frac{1}{n}<\varepsilon} 2-2\varepsilon.    
\end{split}\]
The arbitrariness of $\varepsilon$ implies $\Vert [x_i^m]+[x_i]\Vert=2$, and the arbitrariness of $m\in\mathbb N$ concludes the proof.
\end{proof}

In the same spirit as in Theorem~\ref{theo:ultrapotlargos}, the next result says that we can get stronger notions of octahedrality in ultrapower spaces if we construct a sufficiently large ultrapower space.

\begin{theorem}\label{theo:superlargeohultrapower}
Let $X$ be an octahedral Banach space and $\kappa$ be any infinite cardinal. Then there exists an infinite set $J$ and a countably incomplete ultrafilter $\mathcal U$ over $J$ such that $X_\mathcal U$ is $\kappa$-rigid octahedral.
\end{theorem}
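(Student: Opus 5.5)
We combine the index-set construction from the proof of Theorem~\ref{theo:ultrapotlargos} with the diagonal choice used in Proposition~\ref{prop:improhardtke}. Let $I$ be a set with $\vert I\vert=\kappa$ and let $J$ be the set of finite subsets of $I$. For $A\in J$ put $F_A:=\{B\in J: A\subseteq B\}$, and take a free ultrafilter $\mathcal U$ on $J$ containing the filter basis $\{F_A: A\in J\}$. As shown in the proof of Theorem~\ref{theo:ultrapotlargos}, $\mathcal U$ is countably incomplete. Hence we can fix a decreasing sequence $U_1=J\supseteq U_2\supseteq\cdots$ in $\mathcal U$ with empty intersection. Then every $B\in J$ lies in exactly one set $U_n\setminus U_{n+1}$, and we write $n(B)$ for this $n$.

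Now let $\kappa'<\kappa$ and $\{[x_B^\gamma]_\mathcal U: \gamma<\kappa'\}\subseteq S_{X_\mathcal U}$. By the remark on representatives we may take $\Vert x_B^\gamma\Vert=1$ for all $B,\gamma$. Since $\kappa'<\kappa=\vert I\vert$, we may index the family by elements of $I$: fix an injection $\gamma\mapsto i_\gamma$ from $\kappa'$ into $I$.

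For each $B\in J$, the set $B$ is finite, so $\Gamma_B:=\{\gamma<\kappa': i_\gamma\in B\}$ is finite. Octahedrality of $X$ then gives $x_B\in S_X$ such that
$$\Vert x_B^\gamma+x_B\Vert>2-\tfrac{1}{n(B)}\quad\mbox{for all }\gamma\in\Gamma_B.$$
Set $x:=[x_B]_\mathcal U\in S_{X_\mathcal U}$.

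To check that $\Vert [x_B^\gamma]+x\Vert=2$, fix $\gamma<\kappa'$ and $\varepsilon>0$. Choose $n$ with $1/n<\varepsilon$, and consider
$$C:=\{B: \vert\Vert[x_B^\gamma]+x\Vert-\Vert x_B^\gamma+x_B\Vert\vert<\varepsilon\}\cap F_{\{i_\gamma\}}\cap U_n\in\mathcal U.$$
The set $C$ is nonempty. For $B\in C$ we have $\gamma\in\Gamma_B$ because $i_\gamma\in B$, and $n(B)\ge n$ because $B\in U_n$. Therefore
$$\Vert[x_B^\gamma]+x\Vert>2-\tfrac1{n(B)}-\varepsilon\ge 2-2\varepsilon.$$
Since $\varepsilon$ was arbitrary, $\Vert [x_B^\gamma]+x\Vert=2$, which proves $\kappa$-rigid octahedrality.

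The key design point is letting the ultrafilter on $J$ do two jobs at once. Membership in $F_{\{i_\gamma\}}$ guarantees that each index $\gamma$ is eventually handled, while the partition by the sets $U_n$ forces the error to shrink. Once these are in place, the remaining verification is routine and mirrors the proof of Proposition~\ref{prop:improhardtke}.
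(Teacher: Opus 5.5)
Your proof is correct and follows essentially the same route as the paper. Both use an ultrafilter on the finite subsets of a set of size $\kappa$ refining $\{F_A\}$, a partition coming from countable incompleteness to force the error $1/n$, a diagonal choice of $x_B$ by octahedrality of $X$ on the finitely many indices in $B$, and a verification on $A\cap U_n\cap F_{\{i_\gamma\}}$. Your sets $\Gamma_B$ just make explicit the restriction to indices below $\kappa'$, which the paper leaves implicit when it writes ``for all $\alpha\in F$''.
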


\begin{proof}
Let $J$ be the set of finite subsets of $\kappa$ and consider any free ultrafilter $\mathcal U$ extending the filter basis
$$\mathcal B:=\{F_A: A\in J\},$$
where $F_A:=\{B\in J: A\subseteq B\}$. We know that $\mathcal U$ is a countably incomplete ultrafilter over $J$ as in the proof of Theorem~\ref{theo:ultrapotlargos}. Let us prove that $\mathcal U$ satisfies the requirements. In order to do so, select $\{[x_F^\alpha]: \alpha<\kappa'\}\subseteq S_{X_\mathcal U}$, where $\kappa'<\kappa$, and let us find $[x_F]\in S_{X_\mathcal U}$  such that $\Vert [x_F^\alpha]+[x_F]\Vert=2$ holds for every $\alpha$. In order to do so, let us define $x_F$.

Since $\mathcal U$ is countably incomplete we can find a decreasing sequence $\{B_n: n\in\mathbb N\}\subseteq \mathcal U$ such that $\bigcap\limits_{n\in\mathbb N} B_n=\emptyset$. Thus $\{B_n\setminus B_{n+1}: n\in\mathbb N\}$ is a partition of the set $J$. With this, we will define the desired element $[x_F]$. Given $F\in J=\bigcup\limits_{n\in\mathbb N} B_n\setminus B_{n+1}$ there exists a unique $n\in\mathbb N$ such that $F\in B_n\setminus B_{n+1}$. Now since $F\subseteq \kappa$ is a finite set, we find by the octahedrality of $X$ an element $x_F\in S_X$ with the property that
$$\Vert x_F^\alpha+x_F\Vert>2-\frac{1}{n}\ \forall \alpha\in F.$$
Let us prove that $[x_F]\in S_{X_\mathcal U}$ satisfies the desired requirements. In order to do so select $\alpha<\kappa$ and $\varepsilon>0$. Now set
$$A:=\{F\in J: \vert \Vert [x_F^\alpha]+[x_F]\Vert-\Vert x_F^\alpha+x_F\Vert \vert<\varepsilon\}\in \mathcal U.$$
Now take any $F\in A\cap B_n\cap F_{\{\alpha\}}\in\mathcal U$, where $n$ is taken large enough to satisfy $\frac{1}{n}<\varepsilon$. Now we have
\[\begin{split}
\Vert [x_F^\alpha]+[x_F]\Vert & \mathop{>}^{F\in A}\Vert x_F^\alpha+x_F\Vert-\varepsilon\mathop{>}^{\alpha\in F\in B_n}2-\frac{1}{n}-\varepsilon>2-2\varepsilon.
\end{split}
\]
Since $\varepsilon>0$ was arbitrary we infer $\Vert [x_F^\alpha]+[x_F]\Vert=2$, and the arbitrariness of $\alpha<\kappa'$ concludes the proof.
\end{proof}

\section*{Acknowledgements}  The author is deeply grateful to Antonio Avil\'es for providing the proof of Proposition~\ref{prop:linfextremadisco} and for further fruitful conversations. The author also thanks Miguel Mart\'in for fruitful conversations.

This research has been supported  by MCIU/AEI/FEDER/UE\\  Grant PID2021-122126NB-C31 and by Junta de Andaluc\'{\i}a Grant FQM-0185.

\section*{AI disclosure statement}

The author declares that he refused the use of any AI tool at any of the stages of the paper. Consequently, the author takes full responsibility in the results, the proofs and the language editing of this manuscript.

\end{document}